\newtheorem{theorem}{Theorem}[section]
\newtheorem{lemma}[theorem]{Lemma}
\newtheorem{proposition}[theorem]{Proposition}
\newtheorem{definition}[theorem]{Definition}
\theoremstyle{definition}
\newtheorem{remark}[theorem]{Remark}
\title{Relative Geodesics in the Special Euclidean Group}
\author[1]{Darryl D. Holm\thanks{Email address: \url{d.holm@imperial.ac.uk}}}
\author[2]{Lyle Noakes\thanks{Email address: \url{lyle.noakes@uwa.edu.au}}}
\author[1]{Joris Vankerschaver\thanks{Email address: \url{joris.vankerschaver@gmail.com}}}
\affil[1]{Department of Mathematics, Imperial College London, London SW7 2AZ, United Kingdom}
\affil[2]{School of Mathematics and Statistics, The University of Western Australia, 
35 Stirling Highway, Crawley WA 6009, Australia}
\DeclareMathOperator{\Cay}{Cay}
\DeclareMathOperator{\dCay}{dCay}
\DeclareMathOperator{\Ad}{Ad}
\def\bfi{\bf\it}
\def\lb{\left<}
\def\rb{\right>}
\def\llb{\left<\!\left<}
\def\rrb{\right>\!\right>}
\begin{document}
\maketitle

\begin{abstract}
We propose a notion of distance between two parametrized planar curves, called their \emph{discrepancy}, and defined intuitively as the minimal amount of deformation needed to deform the source curve into the target curve.  A precise definition of discrepancy is given as follows. A curve of transformations in the special Euclidean group $SE(2)$ is said to be \emph{admissible} if it maps the source curve to the target curve under the point-wise action of $SE(2)$ on the plane. After endowing the group $SE(2)$ with a left-invariant metric, we define a relative geodesic in $SE(2)$ to be a critical point of the energy functional associated to the metric, over all admissible curves. The discrepancy is then defined as the value of the energy of the minimizing relative geodesic. In the first part of the paper, we derive a scalar ODE which is a necessary condition for a curve in $SE(2)$ to be a relative geodesic, and we discuss some of the properties of the discrepancy. In the second part of the paper, we consider discrete curves, and by means of a variational principle, we derive a system of discrete equations for the relative geodesics. We finish with several examples.
\end{abstract}

\tableofcontents

\section{Introduction}

\paragraph{Historical overview.}
Computational anatomy is the modern study of anatomical shape and its variability. This study originated in 1917, in the seminal book \emph{Growth and Form} by D'Arcy Thompson, who recognized that anatomical comparison is a mathematical problem, and that its solution would lie in what he called \emph{the Theory of Transformations} \cite[p1032]{Th1942}.\footnote{In his book, Thompson says he ``learnt of it from Henri Poincar\'e''.} Since then, D'Arcy Thompson has been proven correct and many mathematical concepts, particularly  concepts from Lie groups, Riemannian geometry and analysis have been applied to solve what is now called ``the image registration problem''. An example is the comparison of medical images, which is a world-wide technology with an enormous number of uses every day. Perhaps not surprisingly, the comparison of medical images is still based on the Theory of Transformations, as enhanced by  its modern developments. 

Many mathematical frameworks have been developed to deal with the image registration problem, using the Theory of Transformations. The modern frameworks are well-developed and fascinating for the geometry and analysis that underlies their solutions of the image registration problem. An outstanding example is the framework of \emph{large deformation diffeomorphic metric mapping} (LDDMM).\footnote{Recall that a diffeomorphism is a smooth invertible map, with a smooth inverse.} Many papers and books have been written about LDDMM and its variants. We refer the reader to the fundamental texts \cite{MuDe2010,Yo2010}.

\paragraph{Contributions of this paper.}
The present work arose in the context of LDDMM, but it reformulates the problem of registration in a simple and direct manner, for the comparison of two planar curves without using the diffeomorphism group. The reformulation is based on the idea that the \emph{discrepancy} between two planar curves could be estimated quantitatively by defining the minimal amount of deformation needed to deform the source curve into the target curve using only the transformations of the special Euclidean group $SE(2)$ of rotations and translations acting point-wise on the plane. The point of the paper is to define \emph{discrepancy} precisely enough, that the quantitative comparison will be meaningful. 

Before explaining the main content of the paper and how it is organized, we present a few definitions and notation that set the context. 

Let $\mathbf{c}_0,\mathbf{c}_1:[0,1]\to M$ be $C^\infty$ curves, where $M$ is a $C^\infty$ manifold. Let $G$ be a Lie group with a Riemannian metric $\langle ~,~\rangle$, and a transitive left action $.$ on $M$. A $C^\infty$ function $g:[0,1]\to G$ is called {\em admissible} when $g(s).\mathbf{c}_0(s)=\mathbf{c}_1(s)$ for all $s\in [0,1]$.  The {\em energy} of an admissible $g$ is 
$$E_0(g)~:=~\frac{1}{2}\int _0^1\Vert g'(s)\Vert _{g(s)}^2~ds$$
where $\Vert ~\Vert _{g(s)}$ denotes  the Riemannian norm. A critical point $g$ of $E_0$ is said to be a {\em geodesic relative to} $\mathbf{c}_0,\mathbf{c}_1$, or just a {\em relative geodesic} when $\mathbf{c}_0,\mathbf{c}_1$ are understood. 

The present paper investigates relative geodesics in the special case where $G$ is the group $SE(2)$ of Euclidean motions of $\mathbb{R}^2$, with a particular kind of left-invariant Riemannian metric, and the standard left action on $M=\mathbb{R}^2$.   Then, given $\mathbf{c}_0,\mathbf{c}_1:[0,1]\to \mathbb{R}^2$, their {\em discrepancy} $\delta (\mathbf{c}_0,\mathbf{c}_1)$ is the minimum of $E_0(g)$ as $g$ varies over all admissible curves in $SE(2)$. A minimiser $g$ of $E_0$ is necessarily a relative geodesic although, as seen in Proposition \ref{prop:multiple_geodesics}, not all relative geodesics are minimisers. 

When the discrepancy is $0$ there is a constant admissible $g$, namely $\mathbf{c}_0,\mathbf{c}_1$ are {congruent}. More generally the minimum energy $\delta (\mathbf{c}_0,\mathbf{c}_1)$ is the minimum total variability of an admissible $g$. This is meant to capture, to some degree, the intuitive difficulty we find in transforming by eye from the parametric curve $\mathbf{c}_0$ to $\mathbf{c}_1$. 

Because the Riemannian metric on $SE(2)$ is chosen to be left-invariant, it follows easily that $\delta (\mathbf{c}_0,h.\mathbf{c}_1)=\delta (\mathbf{c}_0,\mathbf{c}_1)$ for any {constant} $h\in G$. Usually however $\delta (h.\mathbf{c}_0,\mathbf{c}_1)\not= \delta (\mathbf{c}_0,\mathbf{c}_1)$. Indeed usually $\delta (\mathbf{c}_0,\mathbf{c}_1)\not= \delta (\mathbf{c}_1,\mathbf{c}_0)$, as in Proposition \ref{prop:c0_zero}.  Sometimes there are continua of relative geodesics, as in Lemmas \ref{coroll_c0_zero}, \ref{coroll_c1_zero}. These facts are found by studying the Euler-Lagrange equations (called the continuous equations of motion) for relative geodesics, together with the natural boundary conditions at $s=0,1$. 
The Euler-Lagrange equations  for relative geodesics are derived in \S \ref{sec3} by Euler-Poincar\'e reduction, and also directly.  
 
In \S \ref{sec:discrete_relative_geodesics} we introduce discrete analogues of relative geodesics in $SE(2)$, as objects of separate interest, and 
 with a view to developing numerical methods for continuous relative geodesics. Discrete curves are finite sequences, and the discrete energy is defined in terms of the Cayley map for $SE(2)$. The  Euler-Poincar\'e approach adapts nicely to the discrete case, leading to projected discrete equations of motion (\ref{projected_eom}), and projected discrete boundary conditions (\ref{projected_bc}),  from which discrete relative geodesics can be found by Newton iteration. Alternatively, discrete relative geodesics can be calculated by direct minimisation of the discrete energy. 
 In \S \ref{sec:num} the numerical methods developed in \S \ref{sec:discrete_relative_geodesics} are used to illustrate properties of relative geodesics and discrepancies for some simple examples of plane curves.  A morphing procedure, using a minimal relative geodesic,  illustrates the geometrical difficulty of transforming $\mathbf{c}_0$ to  $\mathbf{c}_1$.

\section{The Euclidean Group $SE(2)$}

In this section, we recall some basic results about the Lie group $SE(2)$, which consists of rotations and translations in the Euclidean plane. For more information about $SE(2)$ and its role in mechanics and control, see \cite{MaRa1994, Ch2009, Ho2011}.

\paragraph{The Lie group $SE(2)$.}

The elements of $SE(2)$ are pairs
$g := (R_\theta, \mathbf{x})$ where $R_\theta \in SO(2)$ represents
the counterclockwise rotation over an angle $\theta \in \mathbb{S}^1$,
and $\mathbf{x} \in \mathbb{R}^2$ represents the translation along $\mathbf{x}$. There is a
one-to-one correspondence between elements $g = (R_\theta, \mathbf{x})
\in SE(2)$ and 3-by-3 matrices $\hat{g}$ of the form
\begin{equation} \label{matrix_group}
  \hat{g} = \begin{pmatrix}
		R_\theta & \mathbf{x} \\
		0 & 1
	\end{pmatrix}.
\end{equation}

In terms of matrices, the group multiplication in $SE(2)$ is given by matrix multiplication. In components, we have that $(R_\theta, \mathbf{x}) \cdot (R_\varphi, \mathbf{y}) = (R_{\theta+\varphi}, R_\theta \mathbf{x} + \mathbf{y})$ and $(R_\theta, \mathbf{x})^{-1} = (R_{-\theta}, - R_{-\theta} \mathbf{x})$.

\paragraph{The Lie algebra $\mathfrak{se}(2)$.}

The Lie algebra of $SE(2)$ will be denoted by $\mathfrak{se}(2)$. The elements of the 3-dimensional Lie algebra $\mathfrak{se}(2)$ can be viewed as infinitesimal rotations and translations in the plane, and are represented as vectors
\[
  \xi = 
    \begin{pmatrix}
      \omega \\ \mathbf{v}
    \end{pmatrix} \in \mathbb{R}^3,
\]
where $\omega \in \mathbb{R}$ and $\mathbf{v} \in \mathbb{R}^2$. There
is a one-to-one-correspondence between the elements $\xi$ of
$\mathfrak{se}(2)$ and 3-by-3 matrices $\hat{\xi}$ of the form
\begin{equation} \label{matrix_algebra}
    \hat{\xi} = 
		\begin{pmatrix}
		-\omega J & \mathbf{v} \\
		0 & 0 
		\end{pmatrix},
		\quad \text{where} \quad
		J = \begin{pmatrix} 
				0 & 1 \\
				-1 & 0 
			\end{pmatrix}.
\end{equation}
In terms of matrices, the Lie bracket on $\mathfrak{se}(2)$ is given by the matrix commutator: $[\hat{\xi}, \hat{\eta}] = \hat{\xi} \hat{\eta} - \hat{\eta} \hat{\xi}$, for all $\hat{\xi}, \hat{\eta} \in \mathfrak{se}(2)$.  In components, we have that $[(\omega, \mathbf{v}), (\eta, \mathbf{w})] = (0, -\omega J \mathbf{w} + \eta J \mathbf{v})$.

The elements of the dual space $\mathfrak{se}(2)^\ast$ are likewise column vectors in $\mathbb{R}^3$, denoted as 
\[
  \mu = 
    \begin{pmatrix} 
      \pi \\
      \mathbf{p}
    \end{pmatrix},
\]
with $\pi \in \mathbb{R}$ and $\mathbf{p} \in \mathbb{R}^2$. The
duality pairing is given in terms of the Euclidean inner product on
$\mathbb{R}^3$ by
\begin{equation} \label{pairing}
	\left< \mu, \xi \right> := \mu^T \xi = 
		\pi \omega + \mathbf{p}^T \mathbf{v}.
\end{equation}

\paragraph{Choice of a norm on $SE(2)$.}

Let $m > 0$ be a fixed parameter.  We define a norm on the vector space $\mathfrak{se}(2)$ by 
\begin{equation} \label{algebra_norm}
	\left\Vert \xi \right\Vert_{\mathfrak{se}(2)}^2 = \left\Vert (\omega, \mathbf{v}) \right\Vert_{\mathfrak{se}(2)}^2
		:= m \omega^2 + \mathbf{v}^T \mathbf{v},
\end{equation}
for $\xi = (\omega, \mathbf{v}) \in \mathfrak{se}(2)$. 

We extend this norm by left translation to a norm on the tangent vectors to the group $SE(2)$, given by 
\begin{equation} \label{group_norm}
	\left\Vert v_g \right\Vert_{SE(2)} = \left\Vert g^{-1} v_g \right\Vert_{\mathfrak{se}(2)}
\end{equation}
for all $v_g \in T_g SE(2)$. Here we represent $g$ in the matrix form \eqref{matrix_group} and the tangent vectors $v_g$ as 3-by-3 matrices 
\[
	v_g = \begin{pmatrix}
			- R_\theta J \theta' & \mathbf{x}' \\
			0 & 0
		\end{pmatrix},
\]
where $(\theta', \mathbf{x}') \in \mathbb{R} \times \mathbb{R}^2$. The multiplication on the right-hand side of \eqref{group_norm} is matrix multiplication, and $g^{-1} v_g \in \mathfrak{se}(2)$. Upon expanding the matrix product, we find for the norm
\begin{equation} \label{norm_explicit}
	\left\Vert v_g \right\Vert_{SE(2)}^2 = m (\theta')^2 + 
		\left\Vert \mathbf{x}' \right\Vert^2.	
\end{equation}
Note that the norm is by definition invariant with respect to the left action of $SE(2)$ on itself: 
\[
	\left\Vert h v_g \right\Vert_{SE(2)} = 
	\left\Vert (hg)^{-1} h v_g \right\Vert_{\mathfrak{se}(2)} = 
	\left\Vert g^{-1} g v_g \right\Vert_{\mathfrak{se}(2)} = 
	\left\Vert v_g \right\Vert_{SE(2)}.
\]

From now on, we will drop the subscripts on the norms just defined, and we will denote both by $\left\Vert \cdot \right\Vert$.

\paragraph{The action of $SE(2)$ on $\mathbb{R}^2$.}

The group $SE(2)$ acts on the plane $\mathbb{R}^2$ in the standard way: an element $(R_\theta, \mathbf{x}) \in SE(2)$ transforms a point $\mathbf{c} \in \mathbb{R}^2$ into $\mathbf{c}' = R_\theta \mathbf{c} + \mathbf{x}$. This action translates into an infinitesimal action of $\mathfrak{se}(2)$ on $\mathbb{R}^2$ defined by $(\omega, \mathbf{v}) \cdot \mathbf{c} = - \omega J \mathbf{c} + \mathbf{v}$. For $\mathbf{c} \in \mathbb{R}^2$ fixed, we denote by $\mathfrak{se}(2)_{\mathbf{c}}$ the {\bfi isotropy subalgebra} of elements in $\mathfrak{se}(2)$ that fix $\mathbf{c}$, that is,
\[
	\mathfrak{se}(2)_{\mathbf{c}} = 
		\{ (\omega, \mathbf{v}) \in \mathfrak{se}(2): \mathbf{v} = \omega J \mathbf{c} \}. 
\]

We let $\mathfrak{se}(2)^\circ_{\mathbf{c}}$ be the annihilator of $\mathfrak{se}(2)_{\mathbf{c}}$ in $\mathfrak{se}(2)^\ast$. In other words, $\mathfrak{se}(2)^\circ_{\mathbf{c}}$ consists of all covectors $(\pi, \mathbf{p})$ which vanish when contracted with the elements of $\mathfrak{se}(2)_{\mathbf{c}}$. A small calculation shows that
\begin{equation} \label{isotropy_annihilator}
	\mathfrak{se}(2)^\circ_{\mathbf{c}} = 
		\{ (\pi, \mathbf{p}) \in \mathfrak{se}(2)^\ast: 
			\pi = - \mathbf{p} \cdot J \mathbf{c} \}
\end{equation}

Note that $\mathfrak{se}(2)_{\mathbf{c}}$ is isomorphic to $\mathfrak{so}(2)$, while $\mathfrak{se}(2)^\circ_{\mathbf{c}}$ is isomorphic to $\mathbb{R}^2$.

Lastly, we define the projection $\mathbb{P}_{\mathbf{c}}: \mathbf{se}(2)^\ast \to \mathbb{R}$ by
\begin{equation} \label{annih_projection}
	\mathbb{P}_{\mathbf{c}}(\pi, \mathbf{p}) = \pi + \mathbf{p} \cdot J \mathbf{c},
\end{equation}	
so that $\mathfrak{se}(2)^\circ_{\mathbf{c}}$ is precisely the kernel of $\mathbb{P}_{\mathbf{c}}$. This map will be useful later on.

\section{Continuous Relative Geodesics in $SE(2)$}\label{sec3}

Throughout this section, we let $\mathbf{c}_0, \mathbf{c}_1 : [0, 1] \to \mathbb{R}^2$ be two fixed parametrized curves. We say that a curve $g: [0, 1] \to SE(2)$ is {\bfi admissible} with respect to $\mathbf{c}_0$ and $\mathbf{c}_1$ if 
\[
	g(s) \cdot \mathbf{c}_0(s) = \mathbf{c}_1(s), 
		\quad
			\text{for all $s \in [0, 1]$},
\]
where the dot on the left hand side represents the standard action of $SE(2)$ on $\mathbb{R}^2$. In other words, a curve $g(s) = (R_{\theta(s)}, \mathbf{x}(s))$ is admissible if 
\begin{equation} \label{se2_constraint}
	R_{\theta(s)} \mathbf{c}_0(s) + \mathbf{x}(s) = \mathbf{c}_1(s),
	\quad
			\text{for all $s \in [0, 1]$}.
\end{equation}

\subsection{The Deformation Energy}

We now wish to find the admissible curves in $SE(2)$ which minimize the {\bfi deformation energy} 
\begin{equation} \label{def_energy_group}
	E_0 = \frac{1}{2} \int_0^1 \left\Vert g'(s) \right\Vert^2 \, ds,
\end{equation}
where the norm $\left\Vert \cdot \right\Vert$ was given in \eqref{group_norm}, and the prime $'$ represents the derivative with respect to the curve parameter $s$. The deformation energy measures the change in $g(s)$ as $s$ varies. 

\begin{definition}
	Let $\mathbf{c}_0, \mathbf{c}_1 : [0, 1] \to \mathbb{R}^2$ be two parametrized curves. A admissible curve $g: [0, 1] \to SE(2)$ with respect to $\mathbf{c}_0, \mathbf{c}_1$ is a {\bfi relative geodesic} if is a minimum of the deformation energy $E_0$ over all admissible curves.  
\end{definition} 

Since the norm \eqref{group_norm} is invariant with respect to the multiplication from the left by elements of $SE(2)$, we may write the deformation energy equivalently as 
\begin{align}
	E_0 & = \frac{1}{2} \int_0^1 \left\Vert g^{-1}(s) g'(s) \right\Vert^2 \, ds 
		\nonumber \\
	& = \frac{1}{2} \int_0^1 m \omega(s)^2 + 
		\left\Vert \mathbf{v}(s) \right\Vert^2 \, ds,  \label{energy0}
\end{align}
where in the second expression the definition  \eqref{algebra_norm} has been used.  Here, $(\omega(s), \mathbf{v}(s)) \in \mathfrak{se}(2)$ in the Lie algebra is related to the group element $g(s) = (R_{\theta(s)}, \mathbf{x}(s))$ by means of the equations
\begin{equation} \label{reconstr_eq}
	\omega = \theta' 
		\quad \text{and} \quad 
	\mathbf{v} = R_{-\theta} \mathbf{x}',
\end{equation}
which follow from expanding the left-trivialized derivative $g^{-1}(s) g'(s)$: 
\[
	g^{-1}(s) g'(s) = 
	\begin{pmatrix}
		R_{-\theta} & - R_{-\theta} \mathbf{x} \\
		0 & 1
	\end{pmatrix}
	\begin{pmatrix}
		-R_\theta J \theta'  & \mathbf{x}' \\
		0 & 0
	\end{pmatrix} = 
	\begin{pmatrix}
		-J \theta' & R_{-\theta} \mathbf{x}' \\
		0 & 0
	\end{pmatrix}.
\]
Using the identification \eqref{matrix_algebra}, we then arrive at the equations \eqref{reconstr_eq}. These relations are referred to as the {\bfi reconstruction relations}: given $\omega(s)$ and $\mathbf{v}(s)$, \eqref{reconstr_eq} may be viewed as a set of first-order ODEs specifying the components of $g(s)$.

For the purpose of deriving the equations that determine the extremals of $E_0$, it will be convenient to add the reconstruction relations as constraints to the deformation energy, so that we obtain
\begin{equation} \label{def_energy_se}
	E =  \int_0^1 \frac{1}{2} \left( m \omega^2(s) + 
		\left\Vert \mathbf{v}(s) \right\Vert^2 \right)
	+ \pi(s)(\theta'(s) - \omega(s))
	+ \mathbf{p}(s)^T(R_{-\theta(s)} \mathbf{x}'(s) - \mathbf{v}(s)) \, ds.
\end{equation}
Here, $E$ depends now on the curve $(R_{\theta}, \mathbf{x})$, the Lie algebra elements $(\omega, \mathbf{v}) \in \mathfrak{se}(2)$, and the Lagrange multipliers $(\pi, \mathbf{p})$, which can be viewed as elements of the dual $\mathfrak{se}(2)^\ast$ of the Lie algebra.  It will be shown below that the critical points of this augmented functional coincide with the critical points of the original deformation energy, given in \eqref{def_energy_group}.

Note that $E$ can be written in a more concise, Lie-algebraic way as 
\begin{equation} \label{def_energy}
	E =  \int_0^1 \frac{1}{2} \llb \xi(s), \xi(s) \rrb + \lb \mu(s), g^{-1}(s) g'(s) - \xi(s) \rb \, ds,
\end{equation}
where $\xi(s) = (\omega(s), \mathbf{v}(s)) \in \mathfrak{se}(2)$, $\mu(s) = (\pi(s), \mathbf{p}(s)) \in \mathfrak{se}(2)^\ast$, and the brackets $\llb \cdot, \cdot \rrb$ refer to the inner product associated to the norm \eqref{algebra_norm}. This energy function can be generalized in a straightforward way to the case of relative geodesics with values in an arbitrary Lie group $G$.  

The variational principle for \eqref{def_energy}, in which the configuration variables, velocities and momenta are varied independently while the reconstruction equations are treated as constraints, is a particular example of the \emph{Hamiltonian-Pontryagin principle} (see \cite{YoMa2006b}). A version of the Hamilton-Pontryagin principle specific to Lie groups can be found in \cite{CeMaPeRa2003}; see also \cite{BoMa2009}.  Our variational principle is also related to the \emph{Clebsch variational principle} of \cite{CoHo2009, GaRa2011}, although it does not coincide with it.

\subsection{The Continuous Equations of Motion}

We now derive the differential equations that describe the critical points of the deformation energy. Because of the analogy with the Euler-Lagrange equations in mechanics, we will refer to these equations as equations of motion.

We take variations of the augmented deformation energy $E$ in \eqref{def_energy_se} with respect to the variables $(\theta, \mathbf{x}, \omega, \mathbf{v}, \pi, \mathbf{p})$, where the velocities $(\omega, \mathbf{v})$ and the momenta $(\pi, \mathbf{p})$ are varied freely, while the configuration variables $(\theta, \mathbf{x})$ are varied with respect to variations that preserve the admissibility constraint \eqref{se2_constraint}. On the level of the variations, the infinitesimal version of this constraint is given by 
\begin{equation} \label{continuous_inf_constraint}
	- R_{\theta} J \mathbf{c}_0 \, \delta \theta + \delta \mathbf{x} = 0,
\end{equation}
where the matrix $J$ was given in \eqref{matrix_algebra}, and this relation allows us to eliminate the variation $\delta \mathbf{x}$ in terms of $\delta \theta$. This infinitesimal constraint was obtained by taking a one-parameter family $(\theta_\epsilon(s), \mathbf{x}_\epsilon(s))$ of admissible curves in $SE(2)$: taking the derivative of the admissibility constraint \eqref{se2_constraint} with respect to $\epsilon$, and putting 
\[
	\delta \theta = \frac{d \theta_\epsilon}{d \epsilon} \Big|_{\epsilon=0} 
		\quad \text{and} \quad
	\delta \mathbf{x} = \frac{d \mathbf{x}_\epsilon}{d \epsilon} \Big|_{\epsilon=0} 	
\]
then gives \eqref{continuous_inf_constraint}.

Taking variations of $E$ first with respect to $\pi$ and $\mathbf{p}$ results in 
\[
	D E \cdot \delta \pi = \int_0^1 \delta\pi(s)(\theta'(s) - \omega(s)) ds
\]
and
\[
	D E \cdot \delta \mathbf{p} = 
		\int_0^1  \delta \mathbf{p}(s)^T (
			R_{-\theta(s)} \mathbf{x}'(s) - \mathbf{v}(s)) \, ds,
\]
so that if a curve $s \mapsto (\theta(s), \mathbf{x}(s), \omega(s), \mathbf{v}(s), \pi, \mathbf{p}(s))$ is a critical point of $E$ then the reconstruction equations \eqref{reconstr_eq} must hold. Taking variations with respect to the velocities $\omega$ and $\mathbf{v}$ similarly results in 
\[
	D E \cdot \delta \omega = \int_0^1 ( m \omega(s) - \pi(s) ) \delta \omega(s) \, ds
\]
and 
\[
	D E \cdot \delta \mathbf{v} = \int_0^1 (\mathbf{v}(s) - \mathbf{p}(s)) 
		\cdot \mathbf{v}(s) \, ds,
\]
so that for a critical point of $E$ the following Legendre transformations between the velocities and the momenta must hold:
\begin{equation} \label{continuous_leg_trafo}
	\mathbf{p} = \mathbf{v} \quad \text{and} \quad \pi = m \omega.
\end{equation}

Lastly, taking variations with respect to the configuration variables $(\theta, \mathbf{x})$ we obtain 
\begin{align*}
	\delta E = & 
		\int_0^1 \left[
		 \left( - \frac{d\pi}{ds} + \mathbf{p}^T R_{-\theta}J\mathbf{x}' \right)
		\delta \theta - 
		\frac{d}{ds} (R_\theta \mathbf{p})^T \delta \mathbf{x} \right] \, ds
			\\
		& + \Big( \pi \delta \theta + 
			(R_\theta \mathbf{p})^T \delta \mathbf{x}
								\Big)\Big|_{s = 0, 1},
\end{align*}
where we have integrated by parts. We now use \eqref{continuous_inf_constraint} to eliminate $\delta \mathbf{x}$ in function of $\delta \theta$, and we obtain 
\begin{align*}
	\delta E = & 
		\int_0^1 \left( - \frac{d\pi}{ds} + \mathbf{p}^T R_{-\theta}J\mathbf{x}'
			- \frac{d}{ds} (R_\theta \mathbf{p})^T R_\theta J \mathbf{c}_0 \right)
						\delta \theta  \, ds 
				\\
			& + \left( \pi + \mathbf{p}^T J \mathbf{c}_0 \right) \delta \theta 
					\big|_{s=0,1}.
\end{align*} 
Since $\delta \theta$ is arbitrary, we see that $\delta E$ vanishes whenever the expressions preceding $\delta \theta$ on the right-hand side vanish, so that 
\begin{equation} \label{continuous_eq_se2}
	- \frac{d\pi}{ds} + \mathbf{p}^T R_{-\theta}J\mathbf{x}'
			- \frac{d}{ds} (R_\theta \mathbf{p})^T R_\theta J \mathbf{c}_0 = 0,
\end{equation}
with boundary conditions
\[
	\pi + \mathbf{p}^T J \mathbf{c}_0 = 0 \quad \text{for $s = 0, 1$}.
\]
Note that in these equations, $\theta$ and $\mathbf{x}$ are not arbitrary, but are related by the admissibility constraint \eqref{se2_constraint}. By writing \eqref{continuous_eq_se2} in terms of the Lie algebra quantities $(\omega, \mathbf{v})$, and with some simple algebraic simplifications, we finally arrive at the following result.

\begin{theorem}\label{thmep}
	Let $\mathbf{c}_0, \mathbf{c}_1 : [0, 1] \to \mathbb{R}^2$ be two parametrized curves. An admissible curve $(R_{\theta(s)}, \mathbf{x}(s))$ in $SE(2)$ is a critical point of the deformation energy $E$  
	if and only if the following scalar equation holds:
	\begin{equation} \label{SE_EOM}
		m \omega' - \mathbf{c}_0^T J \mathbf{v}' - 
			\mathbf{c}_0^T \mathbf{v} \theta' = 0,
	\end{equation}
	with natural boundary conditions $m \omega + \mathbf{v}^T J \mathbf{c}_0 = 0$ for $s = 0, 1$.  Here, $\omega$ and $\mathbf{v}$ are given in terms of $\theta$ and $\mathbf{x}$ by the reconstruction relations \eqref{reconstr_eq}, and the admissibility constraint \eqref{se2_constraint} holds.
\end{theorem}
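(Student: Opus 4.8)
The plan is to start from the intermediate result already established in the excerpt, namely the Euler--Lagrange equation \eqref{continuous_eq_se2}
\[
  -\frac{d\pi}{ds} + \mathbf{p}^T R_{-\theta} J \mathbf{x}' - \frac{d}{ds}(R_\theta \mathbf{p})^T R_\theta J \mathbf{c}_0 = 0
\]
together with the boundary conditions $\pi + \mathbf{p}^T J \mathbf{c}_0 = 0$ at $s=0,1$, and simply rewrite everything in Lie-algebra variables. First I would substitute the Legendre transformations \eqref{continuous_leg_trafo}, $\pi = m\omega$ and $\mathbf{p} = \mathbf{v}$, and the reconstruction relations \eqref{reconstr_eq}, $\omega = \theta'$, $\mathbf{v} = R_{-\theta}\mathbf{x}'$, i.e. $\mathbf{x}' = R_\theta \mathbf{v}$. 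Since these substitutions are precisely the content of the stationarity conditions in the other variables, this also justifies the claimed equivalence between critical points of the augmented functional $E$ and the original $E_0$: any critical point of $E$ satisfies \eqref{reconstr_eq} and \eqref{continuous_leg_trafo}, hence projects to an admissible curve, and conversely an admissible critical curve of $E_0$ lifts by defining $\xi = g^{-1}g'$ and $\mu$ via the Legendre map.

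The core of the argument is then a direct computation on the third term. I would expand
\[
  \frac{d}{ds}\bigl(R_\theta \mathbf{v}\bigr)^T R_\theta J \mathbf{c}_0
  = \bigl(R_\theta' \mathbf{v} + R_\theta \mathbf{v}'\bigr)^T R_\theta J \mathbf{c}_0,
\]
and use $R_\theta' = -\theta' R_\theta J = -\theta' J R_\theta$ (rotations commute with $J$) together with $R_\theta^T R_\theta = I$ and the skew-symmetry $J^T = -J$. The term involving $R_\theta \mathbf{v}'$ collapses to $\mathbf{v}'^T J \mathbf{c}_0 = -\mathbf{c}_0^T J \mathbf{v}'$, and the term involving $R_\theta' \mathbf{v}$ becomes $-\theta'\,\mathbf{v}^T J^T J \mathbf{c}_0 = -\theta'\,\mathbf{v}^T \mathbf{c}_0 = -\theta'\,\mathbf{c}_0^T \mathbf{v}$ (since $J^T J = I$). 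Similarly the middle term $\mathbf{p}^T R_{-\theta} J \mathbf{x}' = \mathbf{v}^T R_{-\theta} J R_\theta \mathbf{v} = \mathbf{v}^T J \mathbf{v} = 0$ by skew-symmetry. Substituting $\pi = m\omega$ into $-d\pi/ds$ gives $-m\omega'$, and collecting the three contributions yields $-m\omega' + \mathbf{c}_0^T J \mathbf{v}' + \mathbf{c}_0^T \mathbf{v}\,\theta' = 0$, which is \eqref{SE_EOM} up to an overall sign. The boundary term transforms the same way: $\pi + \mathbf{p}^T J \mathbf{c}_0 = m\omega + \mathbf{v}^T J \mathbf{c}_0$, giving exactly the stated natural boundary condition.

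For the converse direction I would note that each step above is reversible: given an admissible curve with $(\omega,\mathbf{v})$ defined by \eqref{reconstr_eq} satisfying \eqref{SE_EOM} and the boundary conditions, one sets $\pi := m\omega$, $\mathbf{p} := \mathbf{v}$, reverses the algebra to recover \eqref{continuous_eq_se2}, and then $\delta E = 0$ for all admissibility-preserving variations since $\delta E$ was shown to reduce to a single integral against the arbitrary $\delta\theta$ plus the boundary term, both of which now vanish. I do not expect a genuine obstacle here; the only thing requiring care is bookkeeping of signs and the repeated use of the identities $R_\theta J = J R_\theta$, $J^2 = -I$, $J^T = -J$, $J^T J = I$, and making sure the infinitesimal constraint \eqref{continuous_inf_constraint} has been correctly used so that $\delta\theta$ is the only free variation — i.e. that no hidden contribution from $\delta\mathbf{x}$ has been dropped. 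The scalar nature of the final equation is automatic: after eliminating $\delta\mathbf{x}$, the variation is one-dimensional, so a single scalar ODE results, consistent with the fact that admissibility leaves only the rotation angle $\theta(s)$ as a free function.
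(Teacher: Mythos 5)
Your proposal is correct and follows exactly the paper's route: the paper derives \eqref{continuous_eq_se2} with its boundary conditions from the Hamilton--Pontryagin variation and then obtains \eqref{SE_EOM} "by writing \eqref{continuous_eq_se2} in terms of the Lie algebra quantities," which is precisely the substitution $\pi=m\omega$, $\mathbf{p}=\mathbf{v}$, $\mathbf{x}'=R_\theta\mathbf{v}$ and the $J$/$R_\theta$ algebra you carry out (your sign bookkeeping, including the overall sign flip and the vanishing of $\mathbf{v}^TJ\mathbf{v}$, checks out). You merely make explicit the "simple algebraic simplifications" the paper leaves to the reader.
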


As the relative geodesics are precisely the minima of the deformation energy $E$, restricted to the space of admissible curves, the equation \eqref{SE_EOM} is a necessary condition for a curve $g: [0, 1] \to SE(2)$ to be a relative geodesic.

The equation of motion \eqref{SE_EOM} can be written in a form which involves the angle $\theta$ only. To this end, we differentiate the admissibility constraint \eqref{se2_constraint} with respect to $s$ and multiply from the left by $R_{-\theta}$ to obtain 
\[
	\mathbf{v} = R_{-\theta} \mathbf{c}_1' - \mathbf{c}_0' 
		+ J \mathbf{c}_0 \theta'.
\]
One further differentiation leads to 
\[
	\mathbf{v}' = R_{-\theta} J \mathbf{c}_1' \theta' + R_{-\theta} \mathbf{c}_1''
		- \mathbf{c}_0'' + J \mathbf{c}_0' \theta' + J \mathbf{c}_0 \theta''.
\]
Substituting these two relations into \eqref{SE_EOM} leads after some simplifications to the following non-autonomous second-order ODE for $\theta$:
\[
	( m + \left\Vert \mathbf{c}_0 \right\Vert^2 ) \theta'' 
		+ 2 \mathbf{c}_0^T \mathbf{c}_0' \theta' 
		+ \mathbf{c}_0^T J(  \mathbf{c}_0'' - R_{-\theta} \mathbf{c}_1'') = 0,
\]
with boundary conditions 
\[
	(m + \left\Vert \mathbf{c}_0 \right\Vert^2) \theta' + 
		\mathbf{c}_0^T J (\mathbf{c}_0' - R_{-\theta} \mathbf{c}_1') = 0
		 \quad \text{for $s = 0, 1$}.
\]

Given the two curves $\mathbf{c}_0$ and $\mathbf{c}_1$, the previous equations form a boundary-value problem for $\theta$. Once $\theta$ is determined from these equations, the linear displacement $\mathbf{x}$ can be found from the admissibility constraint \eqref{se2_constraint}.

\paragraph{A direct derivation of the equations of motion.} 

In this section we present an alternative derivation of the equations \eqref{SE_EOM}, which does not use the Euler-Poincar\'e framework. This derivation is arguably somewhat more straightforward than the one presented earlier, and we will use the resulting Euler-Lagrange equations extensively in Section~\ref{sec:discrepancy} below. The advantage of the Euler-Poincar\'e equations, however, is that they can easily be discretized, as we shall show in Section~\ref{sec:discrete_relative_geodesics}.

We begin by introducing the function $\mathcal{E}_0$ given by
\[
	\mathcal{E}_0 = 
		\frac{m}{2} ( \theta' )^2 + 
                \frac{1}{2} \left\Vert \mathbf{x}' \right\Vert^2.
\]
Note that $\mathcal{E}_0$ is precisely the integrand of the deformation energy $E_0$ in \eqref{energy0}. We now take the derivative of the admissibility constraint \eqref{se2_constraint}, and use the resulting equation to obtain an expression for $\mathbf{x}'$. Upon substituting this expression into $\mathcal{E}_0$, we obtain a function $\ell$ which depends on $\theta$ and $\theta'$, and is given by
\begin{equation} \label{lagrangian}
	\ell = 
		\frac{m}{2} ( \theta' )^2 + 
		\frac{1}{2}\left\Vert R_{-\theta} \mathbf{c}_1' - \mathbf{c}_0' + 
			J \mathbf{c}_0 \theta'  \right\Vert^2.
\end{equation}

The function $\ell$ can now be viewed as a Lagrangian function on the tangent bundle $T \mathbb{S}^1$; its Euler-Lagrange equations are 
\begin{equation} \label{EL_general}
	\frac{d}{ds} \left( \frac{\partial \ell}{\partial \theta'} \right) - 
		\frac{\partial \ell}{\partial \theta} = 0
\end{equation}
with natural boundary conditions
\[
	\frac{\partial \ell}{\partial \theta'}  = 0, \quad \text{for $s = 0, 1$}.
\]

For further reference, we define the momentum conjugate to $\theta$ as 
\begin{equation} \label{scalar_p}
	p := \frac{\partial \ell}{\partial \theta'} = 
		( m + \left\Vert \mathbf{c}_0 \right\Vert^2 ) \theta' 
		+ (\mathbf{c}_1')^T R_\theta J \mathbf{c}_0 - (\mathbf{c}_0')^T J \mathbf{c}_0,
\end{equation}
and compute 
\[
	\frac{\partial \ell}{\partial \theta} = 
		(\mathbf{c}_0)^T R_{-\theta} \mathbf{c}_1' \theta' 
		- (\mathbf{c}_0')^T R_{-\theta} J \mathbf{c}_1'.
\]

By substituting these expressions into the Euler-Lagrange equations \eqref{EL_general}, we obtain yet another set of equations characterizing relative geodesics, which we summarize in the following theorem.

\begin{theorem} \label{thm:euler_lagrange}
Let $\mathbf{c}_0, \mathbf{c}_1 : [0, 1] \to \mathbb{R}^2$ be two parametrized curves. An admissible curve $(R_{\theta(s)}, \mathbf{x}(s))$ in $SE(2)$ is a critical point of the deformation energy $E_0$  if and only if the following Euler-Lagrange equation for $\theta$ holds:
\begin{equation} \label{euler_lagrange}
	\frac{d p}{d s} = (\mathbf{c}_0)^T R_{-\theta} \mathbf{c}_1' \theta' 
		- (\mathbf{c}_0')^T R_{-\theta} J \mathbf{c}_1',
\end{equation}
with natural boundary conditions given by $p = 0$ for $s = 0, 1$.
Here $p(s)$ is given by \eqref{scalar_p} and $\mathbf{x}(s)$ is expressed as a function of $\theta(s)$ using the admissibility constraint \eqref{se2_constraint}.
\end{theorem}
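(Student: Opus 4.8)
The plan is to verify directly that the Euler--Lagrange equation \eqref{EL_general} for the reduced Lagrangian $\ell$ in \eqref{lagrangian}, together with its natural boundary conditions, is equivalent to the claimed equations \eqref{euler_lagrange}--\eqref{scalar_p}. Since $\ell$ was obtained from the integrand $\mathcal{E}_0$ of $E_0$ by substituting the admissibility constraint solved for $\mathbf{x}'$, and since this substitution is exactly the parametrization of the admissible curves by $\theta$ alone (with $\mathbf{x}$ recovered from \eqref{se2_constraint}), the critical points of $\ell$ over $\theta:[0,1]\to\mathbb{S}^1$ are in bijection with the critical points of $E_0$ over admissible curves. So the content of the theorem is purely the computation of $\partial\ell/\partial\theta'$ and $\partial\ell/\partial\theta$.

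First I would record the abbreviation $\mathbf{w}(s) := R_{-\theta}\mathbf{c}_1' - \mathbf{c}_0' + J\mathbf{c}_0\,\theta'$, so that $\ell = \tfrac{m}{2}(\theta')^2 + \tfrac12\|\mathbf{w}\|^2$; note $\mathbf{w} = \mathbf{v}$ by the reconstruction relation derived just above Theorem~\ref{thmep}. Then $\partial\mathbf{w}/\partial\theta' = J\mathbf{c}_0$ gives
\[
  p = \frac{\partial\ell}{\partial\theta'} = m\theta' + \mathbf{w}^T J\mathbf{c}_0
    = m\theta' + \big(R_{-\theta}\mathbf{c}_1' - \mathbf{c}_0' + J\mathbf{c}_0\,\theta'\big)^T J\mathbf{c}_0.
\]
Expanding the three terms, using $(J\mathbf{c}_0)^T J\mathbf{c}_0 = \|\mathbf{c}_0\|^2$ (since $J$ is orthogonal) and $(R_{-\theta}\mathbf{c}_1')^T J\mathbf{c}_0 = (\mathbf{c}_1')^T R_\theta J\mathbf{c}_0$ (since $R_{-\theta}^T = R_\theta$), yields exactly \eqref{scalar_p}. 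For the $\theta$-derivative I would use $\partial\mathbf{w}/\partial\theta = (\partial_\theta R_{-\theta})\mathbf{c}_1' = -R_{-\theta}J\mathbf{c}_1'$ — here one must be careful about the sign of $\partial_\theta R_{-\theta}$; from $R_\theta = \exp(-\theta J)$ one gets $\partial_\theta R_\theta = -J R_\theta$ and hence $\partial_\theta R_{-\theta} = J R_{-\theta} = R_{-\theta} J$, so $\partial\mathbf{w}/\partial\theta = R_{-\theta}J\mathbf{c}_1'$; I would double-check this against the paper's sign conventions in \eqref{matrix_algebra}. Then $\partial\ell/\partial\theta = \mathbf{w}^T(\partial\mathbf{w}/\partial\theta)$, and expanding $\mathbf{w}$ and simplifying (the $J\mathbf{c}_0\theta'$ contribution against $R_{-\theta}J\mathbf{c}_1'$ combines with the $\mathbf{c}_0'$ term) should reproduce the stated $\partial\ell/\partial\theta = \mathbf{c}_0^T R_{-\theta}\mathbf{c}_1'\,\theta' - (\mathbf{c}_0')^T R_{-\theta}J\mathbf{c}_1'$. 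Plugging into \eqref{EL_general} gives \eqref{euler_lagrange}, and $\partial\ell/\partial\theta' = p = 0$ at $s=0,1$ gives the boundary conditions.

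The main obstacle is bookkeeping rather than conceptual: keeping the rotation and $J$ conventions consistent (the paper uses $R_\theta = \exp(-\theta J)$, which flips several signs relative to the more common $\exp(\theta J)$), and correctly carrying the $\theta'$-dependence hidden inside $\mathbf{w}$ when differentiating — in particular, remembering that $p$ depends on $\theta'$ both through the explicit $m\theta'$ and through $\mathbf{w}^T J\mathbf{c}_0$, so that $dp/ds$ generates the second-order term $(m+\|\mathbf{c}_0\|^2)\theta''$ one expects. One should also note the cross-term $2\mathbf{c}_0^T\mathbf{c}_0'\,\theta'$ that appears when differentiating $\|\mathbf{c}_0\|^2\theta'$ in $p$; this is consistent with the $\theta$-only second-order ODE displayed earlier in the section, which provides a useful independent check on the algebra.

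Finally, I would remark that the equivalence of critical points of $E$ (Theorem~\ref{thmep}), of $E_0$, and of $\ell$ is already established in the surrounding text — the reconstruction relations \eqref{reconstr_eq} and Legendre transformations \eqref{continuous_leg_trafo} show that extremizing the augmented functional $E$ reduces to extremizing $E_0$, and eliminating $\mathbf{x}'$ via \eqref{se2_constraint} reduces $E_0$ to $\int_0^1 \ell\,ds$ — so no further justification of the variational reduction is needed here; the theorem is simply the explicit form of the resulting Euler--Lagrange system.
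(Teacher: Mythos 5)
Your proposal is correct and follows exactly the route the paper itself takes: eliminate $\mathbf{x}'$ via the admissibility constraint to obtain the reduced Lagrangian $\ell$ of \eqref{lagrangian}, compute $p=\partial\ell/\partial\theta'$ and $\partial\ell/\partial\theta$ (your sign bookkeeping for $\partial_\theta R_{-\theta}=R_{-\theta}J$ under the convention $R_\theta=\exp(-\theta J)$ resolves correctly, and the $(R_{-\theta}\mathbf{c}_1')^T R_{-\theta}J\mathbf{c}_1'$ term vanishes by antisymmetry of $J$, giving the stated $\partial\ell/\partial\theta$), and read off \eqref{euler_lagrange} with the natural boundary conditions $p=0$ at $s=0,1$. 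Nothing essential differs from the paper's own derivation.
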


The procedure of substituting the constraints into the deformation energy to obtain a Lagrangian function which depends on fewer degrees of freedom is similar to the approach of Chaplygin for systems with nonholonomic kinematic constraints (see \cite{Ko1992} and the references therein). In this approach, one eliminates the constrained degrees of freedom to obtain a system of reduced Euler-Lagrange equations with gyroscopic forces. The latter vanish if the constraints are integrable, as is the case for relative geodesics.

\subsection{Discrepancy between Planar Curves} \label{sec:discrepancy}

\begin{definition}
Let $\mathbf{c}_0, \mathbf{c}_1: [0, 1] \to \mathbb{R}^2$ be two parametrized curves in the plane. The \emph{discrepancy} $\delta(\mathbf{c}_0, \mathbf{c}_1)$ between $\mathbf{c}_0$ and $\mathbf{c}_1$ is the minimum of the deformation energy $E_0$ over all $(\mathbf{c}_0, \mathbf{c}_1)$-admissible curves:
\[
  \delta(\mathbf{c}_0, \mathbf{c}_1) = 
  \min_{g(\cdot) \in \mathrm{Adm}(\mathbf{c}_0, \mathbf{c}_1)} E_0(g'),
\]
where $\mathrm{Adm}(\mathbf{c}_0, \mathbf{c}_1)$ is the set of all $(\mathbf{c}_0, \mathbf{c}_1)$-admissible curves.
\end{definition}

Note that the admissible curve $g: [0, 1] \to SE(2)$ which minimizes $E_0$ can be found among the solutions of the equations of motion derived in the previous section.

\paragraph{Asymmetry of the discrepancy.}

The discrepancy $\delta(\mathbf{c}_0, \mathbf{c}_1)$ provides a measure of the difference between the curves $\mathbf{c}_0$ and $\mathbf{c}_1$. In this section, we show that the discrepancy is in general not symmetric, that is, $\delta(\mathbf{c}_0, \mathbf{c}_1)$ differs in general from $\delta(\mathbf{c}_1, \mathbf{c}_0)$. 

Throughout the remainder of this section, we use the formulation of the Euler-Lagrange equations given in Theorem~\ref{thm:euler_lagrange}.

\begin{lemma} \label{coroll_c0_zero}
Let $\mathbf{c}_0(s) = \mathbf{0}$ for all $s$. The geodesics relative to $(\mathbf{c}_0, \mathbf{c}_1)$ are parametrized by $\theta(0)$, and in each case
\[
	\delta( \mathbf{c}_0, \mathbf{c}_1 ) = \frac{1}{2}
		\int_0^1 \left\Vert \mathbf{c}_1' \right\Vert^2 \, ds.
\]
\end{lemma}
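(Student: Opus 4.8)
The plan is to exploit the fact that setting $\mathbf{c}_0 \equiv \mathbf{0}$ collapses the admissibility constraint \eqref{se2_constraint} to $\mathbf{x}(s) = \mathbf{c}_1(s)$, so that the translational part of any admissible curve is completely determined while the rotational part $\theta$ remains entirely free. Substituting this into the deformation energy \eqref{energy0} (equivalently \eqref{norm_explicit}) makes it decouple as
\[
  E_0 = \frac{1}{2}\int_0^1 m\,(\theta'(s))^2\,ds + \frac{1}{2}\int_0^1 \left\Vert \mathbf{c}_1'(s)\right\Vert^2\,ds,
\]
in which the second term is a constant independent of the admissible curve. The first step is simply to record this reduction explicitly.

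Next, to identify the relative geodesics I would specialise the Euler--Lagrange formulation of Theorem~\ref{thm:euler_lagrange} to $\mathbf{c}_0 \equiv \mathbf{0}$, $\mathbf{c}_0' \equiv \mathbf{0}$. The conjugate momentum \eqref{scalar_p} reduces to $p = m\theta'$, and the right-hand side of \eqref{euler_lagrange} vanishes identically, so the equation of motion becomes $m\theta'' = 0$ with natural boundary conditions $m\theta'(0) = m\theta'(1) = 0$. Lifting $\theta$ to a real-valued $C^\infty$ function on $[0,1]$, the condition $\theta'' = 0$ forces $\theta$ to be affine, and then $\theta'(0) = 0$ forces $\theta' \equiv 0$; hence $\theta$ is constant, equal to $\theta(0)$. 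Conversely, any constant $\theta$ together with $\mathbf{x} = \mathbf{c}_1$ solves the equations of motion, so the critical points form exactly the one-parameter family indexed by $\theta(0) \in \mathbb{S}^1$.

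Finally, I would check that each such critical point is an actual global minimiser, so that the formula for $\delta$ is justified: in the decoupled energy above the first term is nonnegative and vanishes precisely when $\theta' \equiv 0$, so the infimum of $E_0$ over admissible curves equals $\tfrac{1}{2}\int_0^1 \left\Vert \mathbf{c}_1'\right\Vert^2\,ds$ and is attained exactly on the family just described. Together these give both assertions of the lemma.

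There is no deep obstacle here, since the vanishing of $\mathbf{c}_0$ trivialises the admissibility constraint; the only point needing care is the passage from "$\theta'' = 0$ on the circle" to "$\theta$ constant'', i.e. ruling out solutions that wind around $\mathbb{S}^1$, which the natural boundary condition $\theta'(0) = 0$ handles at once. A secondary point worth stating cleanly is the coincidence of critical points and minimisers in this degenerate case, which should be argued via the nonnegativity of the $\theta$-term rather than left implicit.
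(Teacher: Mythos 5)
Your proposal is correct and follows essentially the same route as the paper: specialising the Euler--Lagrange formulation of Theorem~\ref{thm:euler_lagrange} to $\mathbf{c}_0 \equiv \mathbf{0}$ gives $p = m\theta'$, hence $m\theta'' = 0$, and the natural boundary conditions force $\theta$ to be constant, with the converse holding trivially. Your additional observation that the energy decouples as $E_0 = \tfrac{1}{2}\int_0^1 m(\theta')^2\,ds + \tfrac{1}{2}\int_0^1 \Vert\mathbf{c}_1'\Vert^2\,ds$ is a worthwhile refinement: it makes explicit both the value of the discrepancy and the fact that every critical point is a global minimiser, two points the paper's proof leaves implicit.
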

\begin{proof}
For a geodesic relative to $(\mathbf{c}_0, \mathbf{c}_1)$ we have that $p = m \theta'$, and from the Euler-Lagrange equations \eqref{euler_lagrange} it follows that $m \theta'' = 0$, so that $\theta$ is an affine function of $s$. By using the natural boundary conditions, it follows that $\theta$ is constant. Conversely any constant $\theta$ satisfies the Euler-Lagrange equations \eqref{euler_lagrange} and the natural boundary conditions, and therefore defines a relative geodesic.
\end{proof}

\begin{lemma} \label{coroll_c1_zero}
Let $\mathbf{c}_1(s) = \mathbf{0}$ for all $s$. The geodesics relative to $(\mathbf{c}_0, \mathbf{c}_1)$ are parametrized by $\theta(0)$, and in each case the discrepancy is given by
\begin{equation} \label{discrep_c1_zero}
	\delta( \mathbf{c}_0, \mathbf{c}_1 ) = \frac{1}{2}
		\int_0^1 \left( \left\Vert \mathbf{c}_0' \right\Vert^2 
			- \frac{((\mathbf{c}_0')^T J \mathbf{c}_0)^2}%
				{m + \left\Vert \mathbf{c}_0 \right\Vert^2}
		\right) \, ds.
\end{equation}
\end{lemma}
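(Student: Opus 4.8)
The plan is to mirror the proof of Lemma~\ref{coroll_c0_zero}, using the Euler--Lagrange formulation of Theorem~\ref{thm:euler_lagrange} and the key simplification that when $\mathbf{c}_1\equiv\mathbf{0}$ (hence $\mathbf{c}_1'\equiv\mathbf{0}$) both the right-hand side of \eqref{euler_lagrange} and the $\theta$-dependence of the Lagrangian $\ell$ in \eqref{lagrangian} vanish.

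First I would note that with $\mathbf{c}_1'\equiv\mathbf{0}$ the right-hand side of \eqref{euler_lagrange} is identically zero, so $p$ is constant along any relative geodesic; the natural boundary condition $p=0$ at $s=0,1$ then forces $p\equiv 0$. Substituting $\mathbf{c}_1'\equiv\mathbf{0}$ into \eqref{scalar_p} gives $p=(m+\Vert\mathbf{c}_0\Vert^2)\,\theta'-(\mathbf{c}_0')^T J\mathbf{c}_0$, so $p\equiv 0$ is equivalent to the explicit first-order equation
\[
\theta'=\frac{(\mathbf{c}_0')^T J\mathbf{c}_0}{m+\Vert\mathbf{c}_0\Vert^2}.
\]
This pins down $\theta'$ and hence $\theta$ up to the additive constant $\theta(0)\in\mathbb{S}^1$; conversely every such $\theta$ solves \eqref{euler_lagrange} with the natural boundary conditions, so the relative geodesics are exactly these, parametrized by $\theta(0)$. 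Since $\ell$ is a strictly convex quadratic in $\theta'$ and independent of $\theta$, the energy $E_0=\int_0^1\ell\,ds$ is minimized pointwise in $\theta'$ by this same choice, so these critical points are genuine minimizers.

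It then remains to evaluate $E_0$ along such a geodesic. Because $\mathbf{c}_1'\equiv\mathbf{0}$, the energy integrand $\mathcal{E}_0$ equals $\ell$, which after expanding the square (using $\Vert J\mathbf{c}_0\Vert=\Vert\mathbf{c}_0\Vert$ and $(J\mathbf{c}_0)^T\mathbf{c}_0'=(\mathbf{c}_0')^T J\mathbf{c}_0$) becomes $\ell=\tfrac12(m+\Vert\mathbf{c}_0\Vert^2)(\theta')^2-\big((\mathbf{c}_0')^T J\mathbf{c}_0\big)\theta'+\tfrac12\Vert\mathbf{c}_0'\Vert^2$. Plugging in the value of $\theta'$ found above collapses the first two terms into $-\tfrac12\,\big((\mathbf{c}_0')^T J\mathbf{c}_0\big)^2/(m+\Vert\mathbf{c}_0\Vert^2)$, and integrating over $[0,1]$ gives precisely \eqref{discrep_c1_zero}. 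The result is manifestly independent of $\theta(0)$, consistent with all the relative geodesics being minimizers.

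The argument is elementary throughout; the only steps requiring any care are the $J$-identities in the last paragraph (using $J^T=-J$, $J^2=-I$, and that a scalar equals its transpose) and, if one wants the minimality claim and not merely criticality, the remark that convexity of $\ell$ in $\theta'$ upgrades the critical point to a true minimum.
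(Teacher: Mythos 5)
Your proof is correct and follows essentially the same route as the paper's: constancy of $p$ from \eqref{euler_lagrange}, $p\equiv 0$ from the natural boundary conditions, the resulting explicit formula for $\theta'$, and substitution into the energy. The one thing you add beyond the paper's (terser) argument is the pointwise-convexity observation that upgrades criticality to global minimality, which is a worthwhile justification for calling the resulting value the discrepancy.
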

\begin{proof}
By \eqref{euler_lagrange}, $p$ is constant, and by the natural boundary conditions, $p$ is identically $0$, so that
\[
	\theta' = \frac{(\mathbf{c}_0')^T J \mathbf{c}_0}%
		{m + \left\Vert \mathbf{c}_0 \right\Vert^2}.
\]
By substituting this expression for $\theta'$ into the deformation energy, we obtain \eqref{discrep_c1_zero} after some simplifications. 
\end{proof}

\begin{proposition} \label{prop:c0_zero}
Let $\mathbf{c}_0(s) = \mathbf{0}$ for all $s$. Then $\delta(\mathbf{c}_1, \mathbf{c}_0) \le \delta(\mathbf{c}_0, \mathbf{c}_1)$, with equality only in the case where, for some $\mathbf{x}_0 \in \mathbb{R}^2$ and some $\phi: [0, 1] \to \mathbb{R}$, $\mathbf{c}_1(s) = \phi(s) \mathbf{x}_0$.
\end{proposition}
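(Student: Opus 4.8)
The plan is to write both discrepancies in closed form using the two preceding lemmas and then compare them. Since $\mathbf{c}_0 \equiv \mathbf{0}$, Lemma~\ref{coroll_c0_zero} gives immediately
$$\delta(\mathbf{c}_0, \mathbf{c}_1) = \frac{1}{2}\int_0^1 \|\mathbf{c}_1'\|^2\, ds .$$
On the other hand, $\delta(\mathbf{c}_1, \mathbf{c}_0)$ is the discrepancy for the source curve $\mathbf{c}_1$ and the target curve $\mathbf{c}_0 \equiv \mathbf{0}$, which is exactly the situation covered by Lemma~\ref{coroll_c1_zero} applied with its source curve taken to be $\mathbf{c}_1$; hence
$$\delta(\mathbf{c}_1, \mathbf{c}_0) = \frac{1}{2}\int_0^1\left( \|\mathbf{c}_1'\|^2 - \frac{\big((\mathbf{c}_1')^T J \mathbf{c}_1\big)^2}{m + \|\mathbf{c}_1\|^2}\right) ds.$$
Subtracting, I obtain
$$\delta(\mathbf{c}_0, \mathbf{c}_1) - \delta(\mathbf{c}_1, \mathbf{c}_0) = \frac{1}{2}\int_0^1 \frac{\big((\mathbf{c}_1')^T J \mathbf{c}_1\big)^2}{m + \|\mathbf{c}_1\|^2}\, ds \ge 0,$$
since $m>0$ makes the denominator strictly positive and the numerator is a square. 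This establishes $\delta(\mathbf{c}_1, \mathbf{c}_0) \le \delta(\mathbf{c}_0, \mathbf{c}_1)$.

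For the equality case, the last integrand is continuous and nonnegative, so the integral vanishes if and only if $(\mathbf{c}_1'(s))^T J\mathbf{c}_1(s) = 0$ for every $s \in [0,1]$. Since $J\mathbf{c}_1$ spans the orthogonal complement of $\mathbf{c}_1$ whenever $\mathbf{c}_1 \ne \mathbf{0}$, this pointwise identity says precisely that $\mathbf{c}_1'(s)$ is parallel to $\mathbf{c}_1(s)$ at each such $s$. Writing $\mathbf{c}_1 = r(\cos\alpha, \sin\alpha)$ in polar form on the open set where $\mathbf{c}_1 \ne \mathbf{0}$, a short computation gives $(\mathbf{c}_1')^T J\mathbf{c}_1 = -r^2\alpha'$, so the condition forces $\alpha$ to be locally constant there; hence $\mathbf{c}_1$ points in a fixed direction $\mathbf{x}_0$ and takes the claimed form $\mathbf{c}_1(s) = \phi(s)\mathbf{x}_0$. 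Conversely, if $\mathbf{c}_1 = \phi\,\mathbf{x}_0$ then $\mathbf{c}_1' = \phi'\mathbf{x}_0$ and $(\mathbf{c}_1')^T J \mathbf{c}_1 = \phi\phi'\,\mathbf{x}_0^T J\mathbf{x}_0 = 0$, so equality indeed holds.

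Most of this is routine bookkeeping with the two lemmas; the one step that calls for genuine care is the last one, namely passing from the pointwise parallelism condition to a single global direction vector $\mathbf{x}_0$. This is immediate on any connected component of $\{\,\mathbf{c}_1 \ne \mathbf{0}\,\}$, and the zeros of $\mathbf{c}_1$ are bridged using continuity of $\mathbf{c}_1$ (letting $\phi$ change sign across a zero); a mild regularity hypothesis, such as $\mathbf{c}_1$ being an immersion, removes any ambiguity here. I would spell this out carefully, since it is the only place where the argument is not a direct substitution.
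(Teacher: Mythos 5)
Your proof is correct and follows essentially the same route as the paper: combine Lemmas~\ref{coroll_c0_zero} and \ref{coroll_c1_zero}, observe that the difference of the two discrepancies is the integral of the nonnegative quantity $\tfrac{1}{2}((\mathbf{c}_1')^T J \mathbf{c}_1)^2/(m+\Vert\mathbf{c}_1\Vert^2)$, and read off the equality case from the vanishing of $(\mathbf{c}_1')^T J \mathbf{c}_1$. Your handling of the equality case is in fact slightly more careful than the paper's (which simply writes $\mathbf{c}_1'=\mu\,\mathbf{c}_1$ and ignores zeros of $\mathbf{c}_1$), and you correctly retain the factor $\tfrac{1}{2}$ that the paper's displayed identity omits.
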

\begin{proof}
Combining lemmas \ref{coroll_c0_zero} and \ref{coroll_c1_zero}, we have that
\[
\delta(\mathbf{c}_1, \mathbf{c}_0 ) = \delta( \mathbf{c}_0, \mathbf{c}_1 )
  - \int_0^1 \frac{((\mathbf{c}_1')^T J \mathbf{c}_1)^2}%
	{m + \left\Vert \mathbf{c}_1 \right\Vert^2} \, ds,
\]
so that $\delta( \mathbf{c}_1, \mathbf{c}_0 ) < \delta( \mathbf{c}_0, \mathbf{c}_1 )$ except when $(\mathbf{c}_1')^T J \mathbf{c}_1$ vanishes identically. In this case, $\mathbf{c}_1'(s) = \mu(s) \mathbf{c}_1(s)$ for some function $\mu$, and therefore $\mathbf{c}_1(s) = \phi(s)\mathbf{x}_0$, with $\phi(s) = e^{\mu(s)}$.
\end{proof}

As an illustration, we take $\mathbf{c}_0(s) = \mathbf{0}$ and $\mathbf{c}_1(s) = (\cos(\pi s), \sin(\pi s) )$ for $s \in [0, 1]$. By Lemmas~\ref{coroll_c0_zero} and \ref{coroll_c1_zero}, we have that
\[
\delta(\mathbf{c}_0, \mathbf{c}_1) = \frac{\pi^2}{2} \approx 4.93, \quad \text{and} \quad
\delta(\mathbf{c}_1, \mathbf{c}_0) = \frac{1}{2} \frac{m}{m+1} \pi^2,
\]
so that indeed $\delta(\mathbf{c}_0, \mathbf{c}_1) > \delta(\mathbf{c}_1, \mathbf{c}_0)$.

\paragraph{Non-minimising relative geodesics.}

There are always at least two geodesics relative to $(\mathbf{c}_0, \mathbf{c}_1)$, corresponding to critical points of the deformation energy $E_0$ regarded as a function of $\theta_0$ where the relative geodesic $s \mapsto (R_{\theta(s)}, \mathbf{x}(s))$ satisfies \eqref{euler_lagrange} for all $s$ and the natural boundary conditions at $s = 0$. 

\begin{proposition} \label{prop:multiple_geodesics}
Let $\mathbf{c}_1$ be a nonconstant affine line segment, and suppose $\mathbf{c}_0(0) = \mathbf{0}$ with $\mathbf{c}_0(1) \ne \mathbf{0}$. Then there are exactly two geodesics relative to $(\mathbf{c}_0, \mathbf{c}_1)$, and these are determined by 
\[
\theta(s) = \int_s^1 \frac{(\mathbf{c}_0'(u))^T J \mathbf{c}_0(u)}%
      {m + \left\Vert \mathbf{c}_0(u) \right\Vert^2} \, du \pm \chi_1
\]
with $\chi_1$ the angle between $\mathbf{c}_0(1)$ and $\mathbf{c}_1'(1)$. Only one of these relative geodesics is a global minimiser of $E_0$.
\end{proposition}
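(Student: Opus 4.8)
The plan is to exploit the extra structure coming from $\mathbf{c}_1$ being an affine segment. Write $\mathbf{c}_1(s)=\mathbf{c}_1(0)+s\,\mathbf{b}$ with $\mathbf{b}\neq\mathbf 0$, so that $\mathbf{c}_1'(s)\equiv\mathbf{b}$ and $\mathbf{c}_1''(s)\equiv\mathbf 0$. With $\mathbf{c}_1''=0$ the Euler--Lagrange equation of Theorem~\ref{thm:euler_lagrange} (equivalently the second-order $\theta$-only form derived just after it) collapses to a total derivative: a short computation shows $\tfrac{d}{ds}\big[(m+\|\mathbf{c}_0\|^2)\theta' - (\mathbf{c}_0')^T J\mathbf{c}_0\big]=0$, so this quantity is a first integral, and in fact it is the conserved part of the momentum $p$ in \eqref{scalar_p}. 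I would then evaluate it at $s=0$: since $\mathbf{c}_0(0)=\mathbf 0$, the natural boundary condition $p=0$ at $s=0$ reduces to $m\,\theta'(0)=0$, so $\theta'(0)=0$ and the first integral is identically zero. This yields the explicit first-order ODE $\theta'(s)=\big((\mathbf{c}_0'(s))^T J\mathbf{c}_0(s)\big)/\big(m+\|\mathbf{c}_0(s)\|^2\big)$, which integrated from $s$ to $1$ gives the displayed formula for $\theta(s)$ with additive constant $\theta(1)$.

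Next I would fix $\theta(1)$ using the remaining natural boundary condition $p=0$ at $s=1$. Substituting the ODE above into \eqref{scalar_p}, the terms proportional to $\theta'(1)$ cancel and one is left with $(\mathbf{c}_1'(1))^T R_{\theta(1)} J\mathbf{c}_0(1)=0$; since $J$ is a rotation by $-\pi/2$, this says precisely that $R_{\theta(1)}\mathbf{c}_0(1)$ is parallel to $\mathbf{c}_1'(1)$. Because $\mathbf{c}_0(1)\neq\mathbf 0$ (hypothesis) and $\mathbf{c}_1'(1)=\mathbf{b}\neq\mathbf 0$ (this is where ``nonconstant'' enters), this condition has exactly two solutions for $\theta(1)$ in $\mathbb S^1$: the two rotations carrying $\mathbf{c}_0(1)$ onto the line spanned by $\mathbf{c}_1'(1)$, which are the $\pm\chi_1$ of the statement. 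Conversely, each of the two resulting curves satisfies the Euler--Lagrange equation and both natural boundary conditions, so there are exactly two relative geodesics, of the stated form.

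It remains to decide which of the two is the global minimiser of $E_0$. Here I would substitute the admissibility constraint $\mathbf{x}=\mathbf{c}_1-R_\theta\mathbf{c}_0$ into $E_0$ and complete the square in $\theta'$; using $\mathbf{c}_1'\equiv\mathbf{b}$ and $\mathbf{c}_0(0)=\mathbf 0$ one obtains an identity of the form
\[ E_0(\theta)=\tfrac12\int_0^1 (m+\|\mathbf{c}_0\|^2)\Big(\theta'-\tfrac{(\mathbf{c}_0')^T J\mathbf{c}_0}{m+\|\mathbf{c}_0\|^2}\Big)^{2}ds \;+\; A \;+\; \tfrac12\|\mathbf{b}\|^{2} \;-\; \mathbf{b}\cdot R_{\theta(1)}\mathbf{c}_0(1), \]
where $A$ does not depend on the admissible curve (in fact $A=\delta(\mathbf{c}_0,\mathbf 0)$ as in Lemma~\ref{coroll_c1_zero}, a pleasant consistency check). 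The integral term is non-negative and vanishes exactly on the curves found in the first paragraph, on which $\theta(1)$ may still be chosen arbitrarily; among these the term $-\mathbf{b}\cdot R_{\theta(1)}\mathbf{c}_0(1)$ is bounded below by $-\|\mathbf{b}\|\,\|\mathbf{c}_0(1)\|$, with equality precisely when $R_{\theta(1)}\mathbf{c}_0(1)$ is a \emph{positive} multiple of $\mathbf{b}$, i.e.\ for exactly one of the two relative geodesics. The other geodesic realises $-\mathbf{b}\cdot R_{\theta(1)}\mathbf{c}_0(1)=+\|\mathbf{b}\|\,\|\mathbf{c}_0(1)\|$, so its energy exceeds the minimum by $2\,\|\mathbf{c}_0(1)\|\,\|\mathbf{c}_1'(1)\|>0$. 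Thus the minimiser exists, is attained at exactly one of the two relative geodesics, and is unique.

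The argument is essentially mechanical once two observations are in place: that $\mathbf{c}_1''=0$ turns the Euler--Lagrange equation into a conservation law (after which the boundary conditions at $s=0$ and $s=1$ cascade to produce the explicit $\theta'$ and the two admissible values of $\theta(1)$), and the completed-square identity, which simultaneously re-derives the Euler--Lagrange characterisation and exhibits the global minimiser. The only point requiring care is bookkeeping of the sign conventions for $J$, $R_\theta$ and the natural boundary conditions; there is no genuine analytic obstacle, and in particular existence of a minimiser is immediate from the completed-square form rather than needing a separate compactness argument.
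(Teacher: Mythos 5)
Your proof is correct and, in its first two steps, is essentially the paper's: both arguments observe that for $\mathbf{c}_1''=0$ the right-hand side of \eqref{euler_lagrange} is a total $s$-derivative, so that $\hat p=(m+\Vert\mathbf{c}_0\Vert^2)\theta'-(\mathbf{c}_0')^TJ\mathbf{c}_0$ is conserved; that $\mathbf{c}_0(0)=\mathbf{0}$ together with $p(0)=0$ forces $\hat p\equiv 0$ and hence pins down $\theta'$; and that the terminal condition $p(1)=0$ then reduces to $(\mathbf{c}_1'(1))^TR_{\theta(1)}J\mathbf{c}_0(1)=0$, leaving exactly two admissible terminal angles. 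Where you genuinely diverge is in the final step. The paper merely \emph{asserts} that $E_0$, restricted to the one-parameter family of solutions of the first-order ODE and regarded as a function of $e^{\mathrm{i}\theta_0}\in\mathbb{S}^1$, has exactly one global maximum, one global minimum and no other critical points; your completed-square identity supplies the justification, since it exhibits $E_0$ as a nonnegative integral (vanishing exactly on that family) plus curve-independent constants plus $-\mathbf{b}\cdot R_{\theta(1)}\mathbf{c}_0(1)$, which as a function of $\theta(1)$ has the form $c-r\cos(\theta(1)-\alpha)$ with $r=\Vert\mathbf{b}\Vert\,\Vert\mathbf{c}_0(1)\Vert>0$. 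This buys you more than the paper states: global minimality over \emph{all} admissible curves rather than only over the critical family, existence of the minimiser without any compactness argument, the explicit energy gap $2\Vert\mathbf{c}_0(1)\Vert\,\Vert\mathbf{c}_1'(1)\Vert$ between the two relative geodesics, and the consistency check against Lemma~\ref{coroll_c1_zero}. Two small sign remarks, both inherited from the paper rather than errors of yours: $\hat p\equiv 0$ gives $\theta'=+(\mathbf{c}_0')^TJ\mathbf{c}_0/(m+\Vert\mathbf{c}_0\Vert^2)$, which integrates to $\theta(s)=\theta(1)-\int_s^1(\cdots)\,du$, i.e.\ the opposite sign on the integral from the proposition's display; and the two terminal angles are $\chi_1$ and $\chi_1+\pi$ rather than literally $\pm\chi_1$ unless $\chi_1$ is interpreted suitably. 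Neither affects the substance of your argument.
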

\begin{proof}
Since $\mathbf{c}_1'(s) =: \mathbf{C}$, where $\mathbf{C}$ is a constant vector, the right-hand side of the Euler-Lagrange equations \eqref{euler_lagrange} can be written as a total $s$-derivative:
\[
(\mathbf{c}_0)^T R_{-\theta} \mathbf{c}_1' \theta' 
		- (\mathbf{c}_0')^T R_{-\theta} J \mathbf{c}_1' = 
          - \frac{d}{ds} \left( \mathbf{c}_0^T R_{-\theta} J \mathbf{C}\right).
\]
Consequently, the Euler-Lagrange equations imply that the quantity
\begin{equation} \label{perturbed_momentum}
  \hat{p} := p + \mathbf{c}_0^T R_{-\theta} J \mathbf{C} 
    = ( m + \left\Vert \mathbf{c}_0 \right\Vert^2 ) \theta' 
      - (\mathbf{c}_0')^T J \mathbf{c}_0
\end{equation}
is conserved. Since $\mathbf{c}_0(0) = \mathbf{0}$ and $p = 0$ at $s = 0$, $\hat{p}(s)$ is identically 0. So, for some $\theta_0$, 
\[
  \theta(s) = \theta_0 - \int_0^s \frac{(\mathbf{c}_0'(u))^T J \mathbf{c}_0(u)}%
      {m + \left\Vert \mathbf{c}_0(u) \right\Vert^2} \, du.
\]
At the terminal end of the curve, i.e. for $s = 1$, we have that $p(1) = \hat{p}(1) = 0$. From \eqref{scalar_p} it then follows that $p(1) - \hat{p}(1) = \mathbf{C}^T R_{\theta(1)} J \mathbf{c}_0(1) = 0$, namely $R_{\theta(1)} \mathbf{c}_0(1)$ is a multiple of $\mathbf{C}$, so that $\theta(1) = \pm \chi_1$ for some angle $\chi_1$. As a consequence, the initial angle $\theta_0$ satisfies
\[
  \theta_0 = \int_0^1 \frac{(\mathbf{c}_0'(u))^T J \mathbf{c}_0(u)}%
      {m + \left\Vert \mathbf{c}_0(u) \right\Vert^2} \, du \pm \chi_1.
\]
Considering $E_0$ as a function of $e^{\mathrm{i} \theta_0} \in \mathbb{S}^1$, one of these values of $\theta_0$ is a point of global maximum, the other is a point of global minimum, and $E_0$ has no other critical points.
\end{proof}

As an illustration, we consider the discrepancy between two line segments. We take $\mathbf{c}_0(s) = s \mathbf{e}_x$ and $\mathbf{c}_1(s) = s \mathbf{e}_y$, with $\mathbf{e}_x, \mathbf{e}_y$ the standard unit vectors along the positive $x$- and $y$-axis, respectively.  From the previous proposition, we deduce that $\theta(s) = \pm \pi/2$ for all $s$. 

For the solution with $\theta(s) = \pi/2$, the admissibility condition results in $\mathbf{x}(s) = 0$ for all $s$. In this case, the effect of applying the relative geodesic is to rotate all of the points of $\mathbf{c}_0$ over $\pi/2$, and not effect any translation. As the relative geodesic is constant, $g(s) = (R_{\pi/2}, \mathbf{0})$, the deformation energy vanishes identically, so that $g(s)$ is a minimizing geodesic. In the case where $\theta(s) = - \pi/2$, the admissibility constraint yields $\mathbf{x}(s) = 2 s \mathbf{e}_y$ so that the effect of the relative geodesic is to rotate each point $\mathbf{c}_0(s)$ over $-\pi/2$, followed by a translation over $2 s \mathbf{e}_y$. The deformation energy in this case is $E = 2$.

\begin{remark}
In the proof of Proposition~\ref{prop:multiple_geodesics}, we have seen that the quantity $\hat{p}$ is conserved when $\mathbf{c}_1'(s) = \mathbf{C}$ with $\mathbf{C}$ constant. A natural question to ask is the following: Is there a continuous symmetry whose associated conserved quantity (through Noether's theorem) is precisely $\hat{p}$? To see that this is indeed the case, we return to the Lagrangian $\ell$ in \eqref{lagrangian} which we rewrite as
\begin{align*}
  \ell & = \frac{m}{2} ( \theta' )^2 + 
      \frac{1}{2} \left( \left\Vert \mathbf{c}_1' \right\Vert^2 
        + \left\Vert \mathbf{c}_0' - J \mathbf{c}_0 \theta' \right\Vert^2\right)
      + (\mathbf{c}_1')^T R_\theta (J \mathbf{c}_0 \theta' - \mathbf{c}_0') \\
      & = \hat{\ell} - \frac{d}{ds}\left( (\mathbf{c}_1')^T R_\theta \mathbf{c}_0 \right),
\end{align*}
where $\hat{\ell}$ is defined as 
\begin{equation} \label{hat-ell}
  \hat{\ell} = \frac{m}{2} ( \theta' )^2 + 
      \frac{1}{2} \left( \left\Vert \mathbf{c}_1' \right\Vert^2 
        + \left\Vert \mathbf{c}_0' - J \mathbf{c}_0 \theta' \right\Vert^2\right)        + (\mathbf{c}_1'')^T R_\theta \mathbf{c}_0.
\end{equation}

Since $\ell$ and $\hat{\ell}$ differ by a total $s$-derivative, they give rise to the same Euler-Lagrange equations (see \cite{Ol1986}). For $\hat{\ell}$, the momentum conjugate to $\theta$ is precisely the quantity $\hat{p}$ defined in \eqref{perturbed_momentum}: 
\[
  \hat{p} = \frac{\partial \hat{\ell}}{\partial \theta'} 
    = (m + \left\Vert \mathbf{c}_0 \right\Vert^2 ) \theta' - (\mathbf{c}_0')^T J \mathbf{c}_0.
\]
In the case that $\mathbf{c}_1'' = 0$, we see from \eqref{hat-ell} that $\hat{\ell}$ does not depend on $\theta$, so that $\hat{p}$ is a conserved quantity:
\[
  \frac{d \hat{p}}{ds} = \frac{\partial \hat{\ell}}{\partial \theta} = 0 
      \quad \quad \text{(when $\mathbf{c}_1'' = 0$)}.
\]
\end{remark}

\section{Discrete Relative Geodesics in $SE(2)$} \label{sec:discrete_relative_geodesics}

We now assume we have two discrete curves $(\mathbf{c}_0)_k,
(\mathbf{c}_1)_k$, $k = 0, \ldots, N$ of $N$ points each.  We wish to find a discrete curve $g_k = (R_{\theta_k}, \mathbf{x}_k)$, $k = 0, \ldots, N$ in $SE(2)$ which is {\bfi admissible} in the sense that 
\begin{equation} \label{discrete_admissibility}
	R_{\theta_k} (\mathbf{c}_0)_k + \mathbf{x}_k = (\mathbf{c}_1)_k
\end{equation}
for all $k = 0, \ldots, N$.  To derive a discrete version of the deformation energy $E$, we need to discretize the spatial derivatives that appear in \eqref{def_energy}. We do this by means of the Cayley map from $\mathfrak{se}(2)$ to $SE(2)$.

Our way of discretizing the variational principle, as well as the discrete equations obtained from it, is inspired by the \emph{discrete Hamilton-Pontryagin principle} of \cite{BoMa2009}; see also \cite{KoMa2011} and \cite{St2010}. As in the continuous case, the main difficulty here is the incorporation of the admissibility constraint \eqref{discrete_admissibility}.

\subsection{The Cayley Map}

\paragraph{Definition.}
The Cayley map
$\mathrm{Cay}: \mathfrak{se}(2) \to SE(2)$ is given by
\begin{equation} \label{cayley_se2}
	\mathrm{Cay}(\xi) = 
		\begin{pmatrix} 
			\hat{R}_{\omega} & \hat{\mathbf{x}}_\xi \\
			0 & 1
		\end{pmatrix},
\end{equation}
where, if $\xi = (\omega, \mathbf{v}) \in \mathfrak{se}(2)$,
\begin{equation} \label{sub_matrices}
	\hat{R}_\omega := 
	\frac{1}{1 + \omega^2/4}
		\begin{pmatrix}
			1 - \omega^2/4 & -\omega \\
			\omega & 1 - \omega^2/4  \\	
		\end{pmatrix},
	\quad \text{and} \quad
	\hat{\mathbf{x}}_\xi := \frac{1}{1 + \omega^2/4}
		\begin{pmatrix}
			v_1-\omega v_2/2 \\
			v_2+\omega v_1/2 
		\end{pmatrix},
\end{equation}
with $\mathbf{v} = (v_1, v_2)$.  Note that $\hat{R}_\omega$ depends
only on $\omega$ and is in fact the Cayley transform in $SO(2)$.

The Cayley map is in fact a $(1, 1)$-Pad\'e approximation to the exponential map from $\mathfrak{se}(2)$. In contrast to the exponential, the Cayley map has the advantage that it is an algebraic map, so that it is easily computable.

The Cayley map shares with the exponential map a number of useful properties, which will be used in some of the derivations below:
\begin{equation} \label{cay_properties}
	D \Cay(0) = \mathrm{id}. \quad \text{and} \quad \Cay(-\xi) = \Cay(\xi)^{-1},
\end{equation}
for all $\xi \in \mathfrak{se}(2)$.

\paragraph{The Right-Trivialized Derivative.}
For our purposes, we will need the \emph{right-trivialized derivative}
of the Cayley map, defined by
\begin{equation} \label{definition_cayley}
  \dCay_\xi(\eta) := (D \mathrm{Cay}(\xi)\cdot\eta) \mathrm{Cay}(\xi)^{-1};
\end{equation}
see \cite{IsMuNoZa2000}.  Note that $D \mathrm{Cay}(\xi)\cdot\eta$ is
an element of $T_\xi SE(2)$, which is translated back to
$\mathfrak{se}(2)$ by right-multiplication by
$\mathrm{Cay}(\xi)^{-1}$. In this way, $\mathrm{d} \mathrm{Cay}$ is a
map from $\mathfrak{se}(2) \times \mathfrak{se}(2)$ to
$\mathfrak{se}(2)$ which is linear in the second argument.

For fixed $\xi \in \mathfrak{se}(2)$, we denote the inverse of $\mathrm{d} \mathrm{Cay}_\xi$ by $\mathrm{d} \mathrm{Cay}_\xi^{-1}$. From \eqref{definition_cayley}, we have that
\begin{equation} \label{expl_dcay_inv}
	\mathrm{d} \mathrm{Cay}_\xi^{-1} (\eta) = [ D \Cay(\xi) ]^{-1} \cdot ( \eta \Cay(\xi) ).
\end{equation}

For the group $SE(2)$, the Cayley map and its derivatives were computed explicitly in
\cite{Ko2008}. Keeping in mind that the elements of $\mathfrak{se}(2)$
are represented as column vectors, for each $\xi \in
\mathfrak{se}(2)$, $\mathrm{d} \mathrm{Cay}_\xi^{-1}$ is a linear transformation from $\mathfrak{se}(2)$ to itself, given by 
\[
   \mathrm{d} \mathrm{Cay}_\xi^{-1}(\eta) = M(\xi) \eta
\]
where, for $\xi = (\omega, v, w)$, the matrix $M(\xi)$ is given by 
\begin{equation} \label{matrix_representation}
  M(\xi) = 		
    \begin{pmatrix}
      1 + \omega^2/4 & 0 & 0 \\
      -w/2 + \omega v/4 & 1 & \omega/2 \\
      v/2 + \omega w/4 & - \omega/2 & 1 
    \end{pmatrix}.
\end{equation}

For future reference, we record the following property of the right-trivialized derivative (see \cite{BoMa2009}): for all $\xi, \eta \in \mathfrak{se}(2)$
\begin{equation} \label{Ad_property}
	\mathrm{d} \mathrm{Cay}^{-1}_\xi(\eta) = 
		\mathrm{d} \mathrm{Cay}^{-1}_{-\xi} 
			\left( \mathrm{Ad}_{\mathrm{Cay}(-\xi)} \eta \right),
\end{equation}
where $\mathrm{Ad}$ is the adjoint action of $SE(2)$ on $\mathfrak{se}(2)$, defined by $\mathrm{Ad}_g(\xi) = g \xi g^{-1}$, where the elements on the right-hand side are interpreted as matrices, as in \eqref{matrix_group} and \eqref{matrix_algebra}.

Lastly, for each $\xi \in \mathfrak{se}(2)$, its adjoint,
$(\dCay^{-1}_\xi )^\ast$, is a linear map from
$\mathfrak{se}(2)^\ast$ to itself, defined by 
\begin{equation} \label{def_adjoint}
	\left< (\dCay^{-1}_\xi )^\ast \mu, \eta \right> 
		= 
	\left< \mu, \dCay^{-1}_\xi(\eta) \right>,
\end{equation}
relative to the duality pairing \eqref{pairing}. Explicitly,
\begin{equation} 
    (\dCay^{-1}_\xi )^\ast \mu
    = M(\xi)^T \mu.
\end{equation}

We will use the Cayley map to provide a parametrization of a neighborhood of the identity in $SE(2)$ by means of the Lie algebra $\mathfrak{se}(2)$, but it is possible to replace the Cayley map by any other local diffeomorphism satisfying \eqref{cay_properties} from $\mathfrak{se}(2)$ to $SE(2)$, such as the exponential map.  The Cayley map, however, has the advantage that it is efficiently computable, and its derivative is particularly easy to characterize.

\subsection{The Deformation Energy}

\paragraph{The Discrete Reconstruction Relations.}

Using the Cayley map, we discretize the reconstruction relations \eqref{reconstr_eq} as follows. Given two successive elements $g_k, g_{k+1}$ in $SE(2)$, we define the {\bfi update element} $\xi_k \in \mathfrak{se}(2)$ by 
\begin{equation} \label{update_def}
	\mathrm{Cay}(h \xi_k) = g_k^{-1} g_{k+1}.
\end{equation}
This is the discrete counterpart of the relation $\xi = g^{-1} g'$. Explicitly, if $\xi_k = (\omega_k, \mathbf{v}_k)$ and $g_i = (R_{\theta_i}, \mathbf{x}_i)$, $i = k, k+1$, we have for the components 
\begin{equation} \label{cayley_components}
	\hat{R}_{h\omega_k} = R_{\theta_{k+1} - \theta_k}, \quad \text{and} \quad
	\hat{\mathbf{x}}_{h\xi_k} = R_{-\theta_k}(\mathbf{x}_{k+1} - \mathbf{x}_k).
\end{equation}
The first relation is equivalent to the following trigonometric relation: 
\begin{equation} \label{cayley_trig}
	\frac{h \omega_k}{2} = \tan \left( \frac{\theta_{k+1} - \theta_k}{2} \right).
\end{equation}

\paragraph{The Deformation Energy.} 

To discretize the deformation energy, we now proceed as in the continuous case. We define $E$ as 
\begin{equation} \label{discrete_deformation_energy}
  E = h\sum_{k=0}^{N-1} 
  \left( \frac{1}{2} 
    \llb \xi_k, \xi_k \rrb 
    + \lb \mu_k, \frac{1}{h} \mathrm{Cay}^{-1}(g_k^{-1} g_{k+1}) - \xi_k \rb
  \right),
\end{equation}
which can be viewed as the discrete counterpart of \eqref{def_energy}. Here, $g_k \in SE(2)$, $\xi_k \in \mathfrak{se}(2)$ and $\mu_k \in \mathfrak{se}(2)^\ast$ are independent variables.  As mentioned at the beginning of this section, this energy function was originally introduced in \cite{BoMa2009}, and the derivations up to \eqref{variations_admissibility}, when we have to enforce the discrete admissibility constraint, will follow that paper.

By taking variations with respect to $\mu_k$, we recover the definition \eqref{update_def} of the update element $\xi_k$. By taking variations with respect to $\xi_k$, we arrive at the equation $\mu_k = \xi_k^\flat$, or explicitly
\begin{equation}  \label{legendre}
	\pi_k = m \omega_k, \quad \text{and} \quad \mathbf{p}_k = \mathbf{v}_k.
\end{equation}

Lastly, by taking variations with respect to the group element $g_k$ we obtain 
\[
	\delta E = \sum_{k = 0}^{N-1} \lb \mu_k, D \Cay^{-1}(g_k^{-1} g_{k+1}) \cdot
		( - g_k^{-1} \delta g_k g_k^{-1} g_{k+1} + g_k^{-1} \delta g_{k+1} ) \rb,
\]
where we have used the fact that $\delta g_k^{-1} = - g_k^{-1} \delta g_k g_k^{-1}$. We now introduce the quantity $\sigma_k := g_k^{-1} \delta g_k$ and focus first on the first derivative term, which we write as 
\[
	D \Cay^{-1}(g_k^{-1} g_{k+1}) \cdot ( \sigma_k g_k^{-1} g_{k+1} ) 
		 = [D \Cay (h \xi_k)]^{-1} \cdot ( \sigma_k \Cay(h \xi_k)) 
		 = \dCay^{-1}_{h\xi_k} (\sigma_k),
\]
where we have used the definition \eqref{expl_dcay_inv} of $\dCay^{-1}$, together with the expression \eqref{update_def} for the update element. For the second term in  $\delta E$, we proceed along similar lines:
\begin{align*}
	D \Cay^{-1}(g_k^{-1} g_{k+1}) \cdot (g_k^{-1} \delta g_{k+1}) & = 
		D \Cay^{-1}(g_k^{-1} g_{k+1}) \cdot (g_k^{-1}g_{k+1} \sigma_{k+1}) \\
		& = [D \Cay (h \xi_k)]^{-1} \cdot 
			( \Ad_{\Cay (h \xi_k)}(\sigma_{k+1})\Cay (h \xi_k)) \\
		& = \dCay^{-1}_{h\xi_k} ( \Ad_{\Cay (h \xi_k)}(\sigma_{k+1})) \\
		& = \dCay^{-1}_{- h\xi_k} (\sigma_{k+1})
\end{align*}
where we have used the property \eqref{Ad_property} in the last step.

Substituting both of these expressions for the derivatives back into the expression for $\delta E$, we arrive at 
\begin{align*}
	\delta E & =
		\sum_{k=0}^{N-1} \left\{-\lb \mu_k, \dCay^{-1}_{h\xi_k} (\sigma_k)\rb 
			+ \lb \mu_k, \dCay^{-1}_{- h\xi_k} (\sigma_{k+1}) \rb \right\}
		\\
	 & = \sum_{k=0}^{N-1} \left\{
		- \lb (\dCay^{-1}_{h\xi_k})^\ast \mu_k, 
				\sigma_k \rb
		+ \lb (\dCay^{-1}_{-h\xi_k})^\ast \mu_k, 
				\sigma_{k+1} \rb
	\right\},
\end{align*}
where we have introduced the adjoint of the linear map $\mathrm{d}\mathrm{Cay}^{-1}_{h\xi_k})$ using the definition \eqref{def_adjoint}. We now rearrange the terms in the sum to get
\begin{equation}
\begin{split}
	\delta E = & - \lb (\mathrm{d}\mathrm{Cay}^{-1}_{h\xi_0})^\ast \mu_0, 
				\sigma_0 \rb + 
			 \lb (\mathrm{d}\mathrm{Cay}^{-1}_{-h\xi_{N-1}})^\ast \mu_{N-1}, 
				\sigma_{N} \rb \\
		& +
		 \sum_{k=1}^{N-1}
		 \lb - (\mathrm{d}\mathrm{Cay}^{-1}_{h\xi_k})^\ast \mu_k
		     + (\mathrm{d}\mathrm{Cay}^{-1}_{-h\xi_{k-1}})^\ast \mu_{k-1},
			\sigma_k \rb.
\end{split}
\end{equation}

It remains for us to obtain an expression for the variations $\sigma_k = g_k^{-1} \delta g_k$. Since $E$ is varied over all discrete admissible curves, \eqref{discrete_admissibility} must hold, and by differentiating and multiplying by $R_{-\theta_k}$, we find
\begin{equation} \label{variations_admissibility}
	- \delta \theta_k J (\mathbf{c}_0)_k + \mathbf{w}_k = 0,
\end{equation}
where $\delta \theta_k$ and $\mathbf{w}_k$ are the components of $\sigma_k$. Note that if $\delta g_k = (\delta \theta_k, \delta \mathbf{x}_k)$, then $\mathbf{w}_k = R_{-\theta_k} \delta \mathbf{x}_k$.  In other words, $\sigma_k$ is an element of $\mathfrak{se}(2)_{(\mathbf{c}_0)_k}$.  Since $\sigma_k$ is otherwise arbitrary, we arrive at the following weak form of the discrete equations of motion:
\begin{equation} \label{disc_weak_eom}
\lb - (\mathrm{d}\mathrm{Cay}^{-1}_{h\xi_k})^\ast \mu_k
		     + (\mathrm{d}\mathrm{Cay}^{-1}_{-h\xi_{k-1}})^\ast \mu_{k-1},
			\sigma_k \rb = 0
\end{equation}
for all $\sigma_k \in \mathfrak{se}(2)_{(\mathbf{c}_0)_k}$, together with the weak boundary conditions
\begin{equation} \label{disc_weak_bc}
	\lb (\mathrm{d}\mathrm{Cay}^{-1}_{h\xi_0})^\ast \mu_0, 
				\sigma_0 \rb = 0
		\quad \text{and} \quad  
	\lb (\mathrm{d}\mathrm{Cay}^{-1}_{-h\xi_{N-1}})^\ast \mu_{N-1}, 
				\sigma_{N} \rb = 0
\end{equation}
for all $\sigma_0 \in \mathfrak{se}(2)_{(\mathbf{c}_0)_0}$ and $\sigma_N \in \mathfrak{se}(2)_{(\mathbf{c}_0)_N}$.  Another way to formulate the equations of motion is to observe that the left-most factor in each of these contractions must take values in $\mathfrak{se}(2)^\circ_{(\mathbf{c}_0)_k}$, defined in \eqref{isotropy_annihilator}.  In this way, we arrive at the following theorem.

\begin{theorem}
	A discrete admissible curve $g_k \in SE(2)$, $k = 0, \ldots, N$, is a critical point of the deformation energy $E$ if and only if \eqref{disc_weak_eom} holds, with boundary conditions \eqref{disc_weak_bc}. This is equivalent to 
\begin{equation} \label{disc_strong_eom}
	- (\mathrm{d}\mathrm{Cay}^{-1}_{h\xi_k})^\ast \mu_k
		     + (\mathrm{d}\mathrm{Cay}^{-1}_{-h\xi_{k-1}})^\ast \mu_{k-1} 
	\in \mathfrak{se}(2)^\circ_{(\mathbf{c}_0)_k}
\end{equation}
together with the boundary conditions 
\[
	 (\mathrm{d}\mathrm{Cay}^{-1}_{h\xi_0})^\ast \mu_0
			\in \mathfrak{se}(2)^\circ_{(\mathbf{c}_0)_0} 
		\quad \text{and} \quad  
	(\mathrm{d}\mathrm{Cay}^{-1}_{-h\xi_{N-1}})^\ast \mu_{N-1}
			\in \mathfrak{se}(2)^\circ_{(\mathbf{c}_0)_N}. 
\]
\end{theorem}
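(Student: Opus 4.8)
The plan is to assemble the variational computation carried out in the paragraphs preceding the statement into a clean necessary-and-sufficient characterization, and then to observe that the passage from the weak form to the strong form is purely linear-algebraic.

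First I would record that varying $E$ freely over $\mu_k$ returns the discrete reconstruction relation \eqref{update_def}, and varying freely over $\xi_k$ returns the discrete Legendre transform \eqref{legendre}; these hold at any critical point irrespective of the admissibility constraint. Next I would revisit the variation with respect to the group variables $g_k$. The key identities are the rewriting of $D\Cay^{-1}(g_k^{-1}g_{k+1})\cdot(\sigma_k g_k^{-1}g_{k+1})$ as $\dCay^{-1}_{h\xi_k}(\sigma_k)$ via \eqref{expl_dcay_inv} and \eqref{update_def}, and the rewriting of $D\Cay^{-1}(g_k^{-1}g_{k+1})\cdot(g_k^{-1}\delta g_{k+1})$ as $\dCay^{-1}_{-h\xi_k}(\sigma_{k+1})$ using the $\Ad$-equivariance property \eqref{Ad_property}. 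After an Abel summation this yields the expression for $\delta E$ already displayed: a sum, over $1\le k\le N-1$, of pairings of the covectors $-(\dCay^{-1}_{h\xi_k})^\ast\mu_k+(\dCay^{-1}_{-h\xi_{k-1}})^\ast\mu_{k-1}$ against $\sigma_k$, plus the two endpoint pairings against $\sigma_0$ and $\sigma_N$.

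At this point I would impose admissibility. Differencing \eqref{discrete_admissibility} and left-multiplying by $R_{-\theta_k}$ gives \eqref{variations_admissibility}, which says precisely that $\sigma_k=g_k^{-1}\delta g_k$ is a free element of the isotropy subalgebra $\mathfrak{se}(2)_{(\mathbf{c}_0)_k}$, and that this membership is the only restriction on the admissible variations. Since the $\sigma_k$ for $k=0,\dots,N$ are mutually independent, $\delta E=0$ for all admissible variations if and only if each coefficient covector annihilates the whole subalgebra $\mathfrak{se}(2)_{(\mathbf{c}_0)_k}$; that is exactly \eqref{disc_weak_eom} together with the boundary conditions \eqref{disc_weak_bc}. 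Finally, the equivalence with \eqref{disc_strong_eom} is immediate from the definition of $\mathfrak{se}(2)^\circ_{(\mathbf{c}_0)_k}$ in \eqref{isotropy_annihilator} as the annihilator of $\mathfrak{se}(2)_{(\mathbf{c}_0)_k}$ inside $\mathfrak{se}(2)^\ast$: a covector pairs to zero with every element of the isotropy subalgebra precisely when it lies in that annihilator.

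Since almost every computational step is already spelled out before the statement, I do not expect a serious analytic obstacle; the only points needing care are verifying that the admissibility constraint imposes no condition on the variations beyond membership in the isotropy subalgebra — so that the $\sigma_k$, including those at the endpoints, may genuinely be treated as arbitrary within that subspace — and confirming that the $\dCay^{-1}$ manipulations invoking \eqref{Ad_property} are applied with the correct signs and arguments. For completeness one should also note that the reconstruction and Legendre relations obtained from the free variations ensure that critical points of the augmented functional $E$ restrict to critical points of the original discrete deformation energy, just as in the continuous case.
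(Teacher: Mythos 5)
Your proposal is correct and follows essentially the same route as the paper, which establishes the theorem through the variational computation laid out in the preceding paragraphs: free variations in $\mu_k$ and $\xi_k$, the $\dCay^{-1}$ rewriting of the group variation via \eqref{expl_dcay_inv} and \eqref{Ad_property}, restriction of $\sigma_k$ to the isotropy subalgebra via admissibility, and the annihilator characterization for the strong form. One minor wording point: the infinitesimal constraint \eqref{variations_admissibility} comes from \emph{varying} \eqref{discrete_admissibility} at fixed $k$ (then multiplying by $R_{-\theta_k}$), not from differencing it in $k$ --- differencing is what produces the separate relation \eqref{discrete_constraint} later --- but your conclusion that $\sigma_k$ ranges freely over $\mathfrak{se}(2)_{(\mathbf{c}_0)_k}$ is exactly right.
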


\paragraph{The Discrete Constraint.}


The equation \eqref{disc_strong_eom} is a single scalar equation, which in itself is insufficient to determine all three components of $\xi_k = (\omega_k, \mathbf{v}_k)$. We now show that the admissibility condition \eqref{discrete_admissibility} gives rise to two further equations, allowing all three components of $\xi_k$ to be determined.

By taking the admissibility constraint for $k$ and subtracting it from the constraint for $k + 1$, we obtain (after multiplying from the left by $R_{-\theta_k}$) that
\[
	R_{\theta_{k+1} - \theta_k} (\mathbf{c}_0)_{k+1} - (\mathbf{c}_0)_k + 
		R_{-\theta_k}(\mathbf{x}_{k+1} - \mathbf{x}_k) = 
	R_{-\theta_k} ( (\mathbf{c}_1)_{k+1} - (\mathbf{c}_1)_k).
\]
Using the relation \eqref{cayley_components} for the components of the Cayley map, this becomes
\begin{equation} \label{discrete_constraint}
	\hat{R}_{h\omega_k} (\mathbf{c}_0)_{k+1} - (\mathbf{c}_0)_k + 
		\hat{\mathbf{x}}_{h\xi_k} = R_{-\theta_k} ( (\mathbf{c}_1)_{k+1} - (\mathbf{c}_1)_k).
\end{equation}
Given $\omega_k$, the first component of $\xi_k$, this relation can be solved to find the corresponding linear velocity $\mathbf{v}_k$. In fact, since $\hat{\mathbf{x}}_{h \xi_k}$ depends linearly on $\mathbf{v}_k$, \eqref{discrete_constraint} is just a linear equation for $\mathbf{v}_k$.

\paragraph{Summary.}

To convince ourselves that the equations derived so far are sufficient to determine the discrete curve $g_k$, $k = 0, \ldots, N$ completely, we summarize the equations of motion. A practical way to implement these equations will be given below in Section~\ref{sec:implementation}.

Assume that two successive elements $g_{k-1}, g_k \in SE(2)$ are given, which satisfy the admissibility condition \eqref{discrete_admissibility}. The equations allow for $g_{k+1}$ to be found as follows:
\begin{enumerate}
	\item Using the Cayley transform \eqref{update_def}, find $\xi_{k-1} \in \mathfrak{se}(2)$.
	\item Solve the following two equations simultaneously for $\xi_k = (\omega_k, \mathbf{v}_k)$: \eqref{disc_strong_eom} and \eqref{discrete_constraint}.
	\item Given $\xi_k$ and $g_k$, determine $g_{k+1}$ from the update relation \eqref{update_def}.
\end{enumerate}

\subsection{Practical Implementation of the Discrete Equations of Motion} \label{sec:implementation}

Using the projector $\mathbb{P}$ defined in \eqref{annih_projection}, we first write the equations of motion as 
\begin{equation} \label{projected_eom}
\mathbb{P}_{(\mathbf{c}_0)_k} 
\left(
	- M(\hat{\xi}_k)^T \hat{\mu}_k + M(-\hat{\xi}_{k-1})^T \hat{\mu}_{k-1} 
\right) = 0 
 \end{equation}
and the boundary conditions as
\begin{equation} \label{projected_bc}
\mathbb{P}_{(\mathbf{c}_0)_0} 
\left(
M(\hat{\xi}_0)^T \hat{\mu}_0
\right) = 0, \quad \text{and} \quad
\mathbb{P}_{(\mathbf{c}_0)_N}
\left(
M(\hat{\xi}_{N-1})^T \hat{\mu}_{N-1}.
\right) = 0.
\end{equation}
Here we use the matrix expression $M(\xi)$ for $\mathrm{d} \mathrm{Cay}^{-1}$, given by \eqref{matrix_representation} and $\hat{\xi}_k = h \xi_k$, $\hat{\mu}_k = h \mu_k$.   From now on, we will drop the hat over the linear quantities $\xi$ and $\mu$, as the factors of $h$ in front of $\xi$ and $\mu$ can be restored at a later stage without any ambiguity.

\paragraph{The Discrete Equations of Motion for $SE(2)$.}

We introduce the matrices
\begin{equation} \label{convenient_matrices}
	A(\omega) := \frac{1}{1 + \omega^2/4} 
		\begin{pmatrix}
			1 & - \omega/2 \\
			\omega/2 & 1
		\end{pmatrix},
	\quad 
	B(\omega) := A(\omega)^{-1} =
		\begin{pmatrix}
			1 & \omega/2 \\
			-\omega/2 & 1
		\end{pmatrix},
\end{equation}
and observe that 
\[
	M(\xi)^T \mu = 
		\begin{pmatrix}
			\left(1 + \frac{\omega^2}{4} \right) \pi 
				- \frac{1}{2} \mathbf{p}^T B(\omega) J \mathbf{v} \\
			B(\omega)^T \mathbf{p} 
		\end{pmatrix}.
\]
Using the fact that $B(\omega)J = J - \frac{\omega}{2} I$, we obtain for the projection \eqref{annih_projection} that for any point $\mathbf{c}$
\begin{equation} \label{projected_expression}
	\mathbb{P}_{\mathbf{c}} (M(\xi)^T \mu) = 
		\left(1 + \frac{\omega^2}{4} \right) \pi 
		- \frac{1}{2} \mathbf{p}^T J \mathbf{v} 
		+ \frac{\omega}{4} \mathbf{p}^T \mathbf{v}
		+ \mathbf{p}^T J \mathbf{c} - \frac{\omega}{2} \mathbf{p}^T \mathbf{c}.
\end{equation}

Using \eqref{legendre} to write the momenta in terms of $\omega$ and $\mathbf{v}$, we obtain for the projected equation of motion \eqref{projected_eom} 
\begin{multline} \label{explicit_se_equation}
		m\left(1 + \frac{\omega_k^2}{4} \right) \omega_k 
		+ \frac{\omega_k}{4} \left\Vert \mathbf{v}_k \right\Vert^2
		+ \mathbf{v}_k^T J (\mathbf{c}_0)_k -
		\frac{\omega_k}{2} \mathbf{v}_k^T (\mathbf{c}_0)_k\\
		= 
		m\left(1 + \frac{\omega_{k-1}^2}{4} \right) \omega_{k-1} 
			+ \frac{\omega_{k-1} }{4} \left\Vert \mathbf{v}_{k-1} \right\Vert^2
		+ \mathbf{v}_{k-1}^T J (\mathbf{c}_0)_k + 
		\frac{\omega_{k-1}}{2} \mathbf{v}_{k-1}^T (\mathbf{c}_0)_k.
\end{multline}

\paragraph{The Linear Velocities.}

To solve the discrete constraint \eqref{discrete_constraint} for $\mathbf{v}$, we let 
\begin{equation} \label{quantity_b}
	\mathbf{b}_k := R_{-\theta_k} ( (\mathbf{c}_1)_{k+1} - (\mathbf{c}_1)_k)- \hat{R}_{\omega_k} (\mathbf{c}_0)_{k+1} + (\mathbf{c}_0)_k,
\end{equation}
so that \eqref{discrete_constraint} is equivalent to $A(\omega_k) \mathbf{v}_k = \mathbf{b}_k$, which can be solved as
\begin{equation} \label{velocity_relation}
	\mathbf{v}_k = B(\omega_k) \mathbf{b}_k.
\end{equation}
Notice that the right-hand side depends only on $\omega_k$ and $\theta_k$.

\paragraph{The Boundary Conditions.} 

Lastly, we show how the boundary conditions \eqref{projected_bc} can be made more explicit.  We assume that the two discrete curves have been translated to the origin, so that $(\mathbf{c}_0)_0 = (\mathbf{c}_1)_0 = 0$. Using \eqref{projected_expression}, the boundary condition for $k = 0$ becomes
\[
	\omega_0 \left( m + \frac{1}{4}(m \omega_0^2 + 
		\left\Vert \mathbf{v}_0 \right\Vert^2 )  \right) = 0,
\]
so that $\omega_0 = 0$.

To find $\mathbf{v}_0$, we focus on the discrete constraint \eqref{discrete_constraint}, which becomes for $k = 0$
\[
	\hat{R}_{\omega_0} (\mathbf{c}_0)_1  + 
		\hat{\mathbf{x}}_{\xi_0} = R_{-\theta_0} (\mathbf{c}_1)_1,
\]
so that at  $k = 0$, the following conditions hold:
\begin{equation} \label{leftmost_bc}
	\mathbf{x}_0 = 0, \quad \omega_0 = 0, 
		\quad \mathbf{v}_0 = R_{-\theta_0} (\mathbf{c}_1)_1 - (\mathbf{c}_0)_1,
\end{equation}
and where $\theta_0$ is arbitrary.  Once $\theta_0$ is chosen, these relations suffice to find the first two group elements $g_0$ and $g_1$.

At the other end of the curve, the boundary condition \eqref{projected_bc} for $k = N$ reads
\begin{equation} \label{terminal_bc}
m \left(1 + \frac{\omega_{N-1}^2}{4} \right) \omega_{N-1}
		+ \frac{\omega_{N-1}}{4} \left\Vert \mathbf{v}_{N-1} \right\Vert^2
		+ \mathbf{v}_{N-1}^T J (\mathbf{c}_0)_N - \frac{\omega_{N-1}}{2} \mathbf{v}_{N-1}^T (\mathbf{c}_0)_N = 0.
\end{equation}

\paragraph{Summary.} To summarize, the discrete equations of motion can be solved as follows. Given an element $(\theta_k, \mathbf{x}_k) \in SE(2)$, we may find the subsequent element $(\theta_{k+1}, \mathbf{x}_{k+1}) \in SE(2)$ by first solving the equations of motion \eqref{explicit_se_equation} for $\theta_{k+1}$, where $\omega_k$ has been eliminated using the Cayley relation \eqref{cayley_trig} and $\mathbf{v}_k$ using the linear relation \eqref{velocity_relation}. Afterwards, we then compute $\mathbf{x}_{k+1}$ from $\theta_{k+1}$ using the admissibility constraint.

Given an initial condition for $\theta_0$, the leftmost boundary conditions \eqref{leftmost_bc} can be used to find $\omega_0, \mathbf{x}_0$, and $\mathbf{v}_0$. We may then solve the discrete equations in motion to obtain $(\theta_k, \mathbf{x}_k)$ for $k = 1, \ldots, N$, until we arrive at $k = N$. Starting with arbitrary initial data for $\theta_0$, the corresponding solution will in general not satisfy the terminal boundary condition \eqref{terminal_bc}. Below, we outline a shooting algorithm which will allow us to adjust $\theta_0$ so as to satisfy the terminal boundary condition.

\subsection{Solving the Boundary Value Problem} \label{sec:BVP}

\paragraph{First-Variation Equations.}
We now solve the boundary value problem \eqref{projected_eom}, \eqref{projected_bc} using a simple Newton iteration. To this end, we begin by linearizing the equations \eqref{explicit_se_equation} around a given solution. We denote 
\[
	c_k(\omega, \mathbf{v}) := m \left( 1 + \frac{3}{4} \omega^2 \right)  + 
		\frac{\left\Vert \mathbf{v} \right\Vert^2}{4} - 
		\frac{1}{2} \mathbf{v}^T (\mathbf{c}_0)_k 
\]
and
\[
	\mathbf{d}_k(\omega) := \frac{\omega}{2} \mathbf{v} + J (\mathbf{c}_0)_k
	-  \frac{\omega}{2} (\mathbf{c}_0)_k.
\]
The first-variation equation may then be expressed as
\begin{equation} \label{first_var}
	c_k(\omega_k, \mathbf{v}_k) \delta \omega_k + 
	\mathbf{d}_k(\omega_k)^T \delta \mathbf{v}_k = 
	c_k(-\omega_{k-1}, -\mathbf{v}_{k-1}) \delta \omega_{k-1} + 
	\mathbf{d}_k(-\omega_{k-1})^T \delta \mathbf{v}_{k-1}.
\end{equation}
This is a single linear equation for $\delta \omega_k$; $\delta \mathbf{v}_k$ can be obtained from the linearization of \eqref{velocity_relation}. After some algebra, we obtain 
\[
	\delta \mathbf{v}_k = J\left( \frac{1}{2} \mathbf{b}_k + 
		A(\omega_k)  (\mathbf{c}_0)_{k+1}  \right) 
			\delta \omega_k + B(\omega_k) R_{-\theta_k} J 
			( (\mathbf{c}_1)_{k+1} - (\mathbf{c}_1)_k ) \delta \theta_k,
\]
where $A(\omega)$ was defined in \eqref{convenient_matrices}.
Lastly, the variation $\delta \theta_k$ may be obtained from the linearization of the Cayley equation \eqref{cayley_trig} and is given by
\[
	\delta \theta_{k+1} - \delta \theta_k = 
		\frac{1 - \omega_k^2/4}{1 + \omega_k^2/4} \delta \omega_k.
\]

\paragraph{Newton Iteration.} Starting with a value $\theta_{\mathrm{guess}}$, we set $\theta_0 = \theta_{\mathrm{guess}}$ we solve the equations of motion \eqref{explicit_se_equation} for $k = 0, \ldots, N-1$, and we compute the first variational quantities \eqref{first_var} along the trajectory. At the terminal end of the curve, we compute 
\[
	\delta \theta_{\mathrm{guess}} = 
	\frac{m \left(1 + \frac{\omega_{N-1}^2}{4} \right) \omega_{N-1}
		+ \frac{\omega_{N-1}}{4} \left\Vert \mathbf{v}_{N-1} \right\Vert^2
		+ \mathbf{v}_{N-1}^T J (\mathbf{c}_0)_N - \frac{\omega_{N-1}}{2} \mathbf{v}_{N-1}^T (\mathbf{c}_0)_N}{c_N(\omega_{N-1}, \mathbf{v}_{N-1}) \delta \omega_{N-1} + 
	\mathbf{d}_N(\omega_{N-1})^T \delta \mathbf{v}_{N-1}};
\]
that is, \eqref{terminal_bc} divided by its linearization, and we update $\theta_{\mathrm{guess}}$ by 
\[
	\theta_{\mathrm{guess}} \mapsto \theta_{\mathrm{guess}} - 
		\delta \theta_{\mathrm{guess}}
\]
to obtain our new starting value. The algorithm typically converges to a solution of the boundary value problem after only a few iterations.

\section{Direct Minimization of the Energy Functional}

Instead of explicitly solving the boundary value problem \eqref{projected_eom}, one can also  minimize the deformation energy \eqref{discrete_deformation_energy} directly.  To bring $E$ into a form which can be handled conveniently by standard optimization software, we write it as 
\begin{equation} \label{energy_angles}
	h E_0(\theta_0, \ldots, \theta_N) = \frac{1}{2} \sum_{k = 0}^{N-1} 
		\left( m \omega_k^2 + \left\Vert \mathbf{v}_k\right\Vert^2 \right).
\end{equation}
Here, we recall that the linear quantities $\omega_k$ and $\mathbf{v}_k$ have been scaled by $h$; this explains the factor $h$ in front of $E$ on the left-hand side.  Recall that $\omega_k$ is given in terms of the angles $\theta_l$ by \eqref{cayley_trig}, while $\mathbf{v}_k$ is given by \eqref{velocity_relation}. Equivalently, $E_0$ can be written as 
\[
h E_0(\theta_0, \ldots, \theta_N) = \frac{1}{2} \sum_{k = 0}^{N-1} 
		\left( m \omega_k^2 + \left(1 + \frac{\omega_k^2}{4} \right) \left\Vert \mathbf{b}_k\right\Vert^2 \right),	
\]
where $\mathbf{b}_k$ is given by \eqref{quantity_b}.

The gradient of $E_0$ with respect to $\theta_i$ can easily be computed from this expression. A standard calculation yields
\[
	\frac{\partial h E_0}{\partial \theta_k} = 
		Q_{k-1} - Q_{k}
		+ \left( 1 + \frac{\omega_k^2}{4} \right)
			\mathbf{f}_k \cdot \mathbf{g}_k
\]
for $k = 1, \ldots, N-1$, where 
\[
Q_i =  \left( 1 + \frac{\omega_i^2}{4} \right)
\left[
	\left( m  + \frac{\left\Vert \mathbf{b}_i \right\Vert^2}{4} \right)\omega_i 
	- \left( 1 + \frac{\omega_i^2}{4} \right) 
		\mathbf{b}_i \cdot \frac{d \hat{R}_\omega}{d \omega}(\omega_i) 
				(\mathbf{c}_0)_{i+1} \right]
\]
and
\[
\mathbf{f}_i  = - \hat{R}_{\omega_i} (\mathbf{c}_0)_{i+1} + (\mathbf{c}_0)_i,
	\quad \quad
\mathbf{g}_i = J R_{-\theta_i} ( (\mathbf{c}_1)_{i+1} - (\mathbf{c}_1)_i ).
\]

At the terminal points, we have
\[
\frac{\partial h E_0}{\partial \theta_0} = 
	 - Q_{0}
		+ \left( 1 + \frac{\omega_0^2}{4} \right)
			\mathbf{f}_0\cdot \mathbf{g}_0
	\quad \text{and} \quad
\frac{\partial h E_0}{\partial \theta_N} = 
		Q_{N-1}.
\]

\section{Numerical Results}\label{sec:num}

Unless otherwise noted, the parameter $m$ in the deformation energy \eqref{discrete_deformation_energy} will be taken to be equal to $2$ in the numerical experiments below.

\subsection{Discrepancy of Simple Planar Curves} 

\begin{figure}
\begin{center}
	\includegraphics{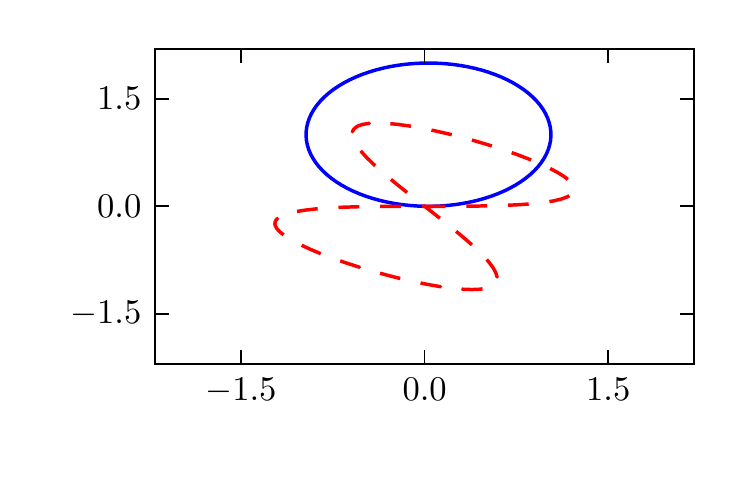}
	\includegraphics{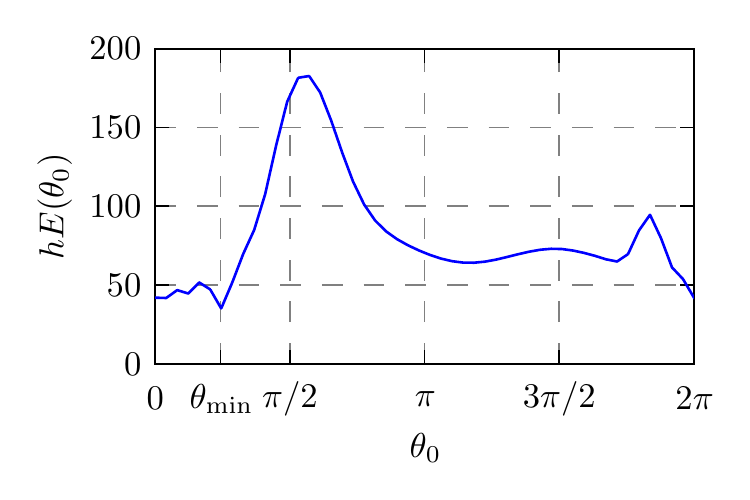}
	\caption{Left: we consider relative geodesics between the circle in blue (solid line) and the figure eight rotated over $\pi/3$ in red (dashed line). Right: the energy as a function of the initial condition $\theta_0$ has several minima. The global minimum is attained for $\theta_{\mathrm{min}} \approx 0.76$, where the energy is approximately $35.12$. \label{fig:circle_to_eight_theta_vs_energy}}
\end{center}
\end{figure}

\paragraph{Relative geodesics between the unit circle and a figure-eight shape.}

In this example, we compute the discrepancy between the unit circle with parametric representation $\mathbf{c}_0 = (\cos(2\pi s), \sin(2 \pi s))$ and the figure eight given by $\mathbf{c}_1(s) = (\sin(4\pi s), \sin (2\pi s))$, for $s \in [0, 1]$. We divide the parameter interval into  $N = 100$ equal subintervals, and we let $(\mathbf{c}_i)_k = \mathbf{c}_i(hk)$ for $i = 0, 1$ and $k = 0, \ldots, N$, where $h = 1/N$. We finish by rotating and translating both curves so that they start at the origin and are tangent to the $x$-axis. The final layout of the curves is shown in Figure~\ref{fig:circle_to_eight_theta_vs_energy}. Upon solving the boundary value problem, we find that the global minimum of the deformation energy is located at $\theta_{\mathrm{min}} \approx 0.7626$, where $\delta(\mathbf{c}_0, \mathbf{c}_1) = E(\theta_{\mathrm{min}}) \approx 35.1236$.

\begin{table}
\begin{center}
\begin{tabular}{c|c}
$N$ & $\delta(\mathbf{c}_0, \mathbf{c}_1)$ \\
\hline \hline
10  &  \underline{3}7.07641 \\
100  &  \underline{35.1}2402 \\
1000  &  \underline{35.146}75 \\
10000  &  35.14698 \\
\end{tabular}
\caption{Whenever the amount of interpolation points increases by a factor of 10, the accuracy of the discrepancy increases by two digits, suggesting that the order of accuracy of our numerical method is 1. \label{table:convergence}}
\end{center}
\end{table}

\paragraph{Convergence of the discrepancy.}

Last, we use the problem of finding the discrepancy between the circle and the figure eight to get a rough estimate of the order of accuracy of our numerical method in terms of the number of sample points on the discrete curves. To do this, we run a number of simulations: at each stage, we choose $N + 1$ sample points on each curve, and we compute the discrepancy between $\mathbf{c}_0$ and $\mathbf{c}_1$. As $N$ increases, the discrepancy is expected to approach a limit value, and the rate at which convergence takes place will give us an estimate of the order of accuracy of our numerical method.

In Table~\ref{table:convergence}, we have listed the discrepancy for a few choices of $N$. Roughly speaking, as $N$ goes up by a factor of 10, the discrepancy gains two digits of accuracy, so the order of accuracy of the method is approximately one.

\subsection{Interpolation for Curves on $SE(2)$} \label{sec:interpolation}

To provide some further visual insight into the nature of relative geodesics, we use a simple form of interpolation on $SE(2)$. Given an element $g \in SE(2)$, we define for $\epsilon \in [0, 1]$,
\[
	g_\epsilon = \exp(\epsilon \log(g)), 
\]
where $\exp : \mathfrak{se}(2) \to SE(2)$ is the exponential map and $\log = \exp^{-1}$ its inverse. The element $g_\epsilon$ will be well-defined provided that $g$ is in the range of the exponential map. In this way, we obtain a smooth curve in $SE(2)$ which connects the identity element to $g$ as we let $\epsilon$ range from 0 to 1.

Given a curve $s \mapsto g(s)$, we may apply this transformation to each point of the curve in order to obtain a family of curves $g_\epsilon(s)$. Now, assume that the original curve $g(s)$ matches the planar curves $\mathbf{c}_0$ and $\mathbf{c}_1$ and set $\mathbf{c}_\epsilon(s) = g_\epsilon(s) \cdot \mathbf{c}_0$.  As $\epsilon$ ranges from 0 to 1, $\mathbf{c}_\epsilon$ will be a curve which smoothly interpolates between $\mathbf{c}_0$ and $\mathbf{c}_1$.  A similar procedure may be done for the case of discrete curves.

Most of the computations in the remainder of this section were visualized using the following form of linear interpolation: whenever we compute an admissible curve $g(s)$ with respect to two planar curves $\mathbf{c}_0$ and $\mathbf{c}_1$, we construct the intermediate curves $\mathbf{c}_\epsilon$ to give an idea of the deformations carried out by $g(s)$. While the sequence of curves thus obtained has no immediate physical meaning, it nevertheless gives a good intuitive idea of the amount of deformation needed to match one curve with another. To illustrate this, we show in Figure~\ref{fig:matching-comparison} the matching between a circle and a figure-eight shape, where the matching is first done using the global minimizer of the energy~\eqref{def_energy_group}, and secondly with two different local minimizers. 

\begin{figure}
\begin{center}
	\subfloat[][Global minimum, $\theta_0 = 0.7626$, $E(\theta_0) = 35.1236$.]%
		{\includegraphics[scale=.4]{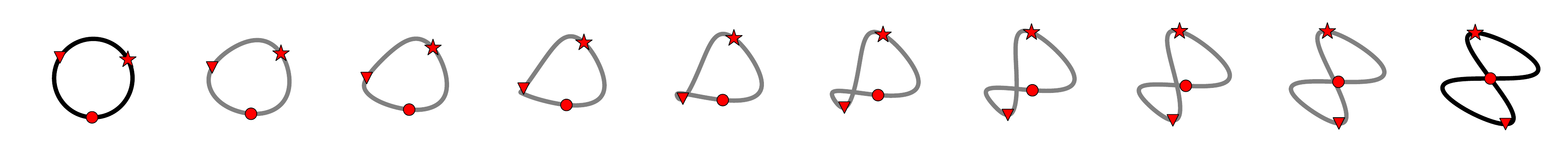}} \\
	\subfloat[][Local minimum, $\theta_0 = 0.3608$, $E(\theta_0) = 44.4211$.]%
		{\includegraphics[scale=.4]{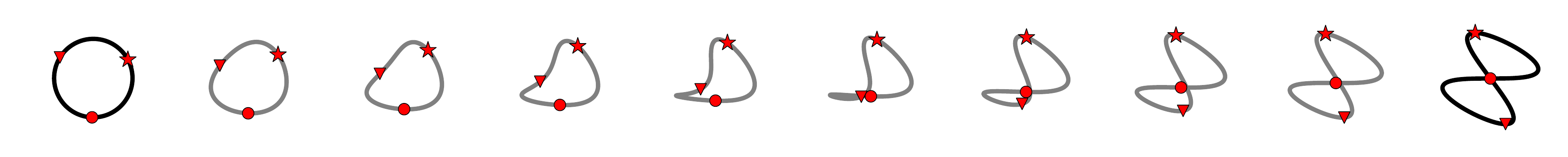}} \\
	\subfloat[][Local minimum, $\theta_0 = 5.3777$, $E(\theta_0) = 64.9445$.]%
		{\includegraphics[scale=.4]{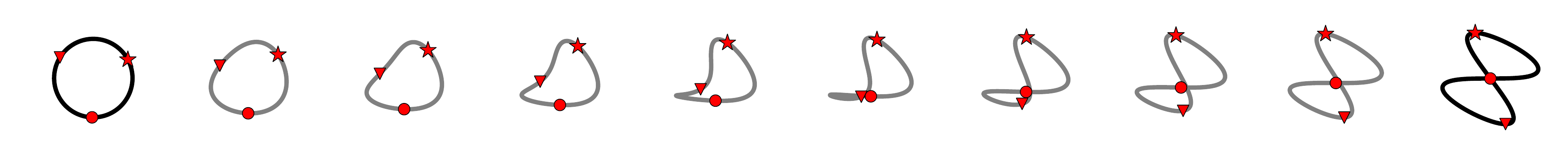}}
	\caption{Matching of a circle and a figure-eight shape using the global minimizer of the deformation energy (Figure~(a)) and using two local minimizers (Figures~(b) and~(c)). The global minimizer deforms the circle into the figure eight with considerably less deformation than the two local minimizers. To indicate the motion of the individual points on the curve, the marks  indicate respectively the first point on the curve (circle), the point at index $k = N/3$ (asterisk), and the point at index $k = 2N/3$ (triangle). \label{fig:matching-comparison}}
\end{center}
\end{figure}

\subsection{Asymmetry of the Discrepancy}

We have mentioned before that the discrepancy is not symmetric. We illustrate this by matching a circle $\mathbf{c}_0$ of radius $r = 0.1$ with a figure-eight shape, given by the parametric representation $\mathbf{c}_1(s) = (\sin(4\pi s), \sin(2 \pi s))$. On both curves, 100 points were sampled equidistantly in $s$, and the parameter $m$ in the norm \eqref{def_energy_group} was set to 2. We find the optimal admissible curve $g$ by means of the algorithm in Section~\ref{sec:BVP}. For the discrepancy, we obtain 
\[
	\delta(\mathbf{c}_0, \mathbf{c}_1) = 47.6261
	\quad \text{and} \quad
	\delta(\mathbf{c}_1, \mathbf{c}_0) = 39.8011.
\]

The deformations leading to these respective discrepancies are visualized in Figure~\ref{fig:asym-visualizations}, using the interpolation procedure described in Section~\ref{sec:interpolation}.

\begin{figure}
\begin{center}
	\includegraphics[scale=.4]{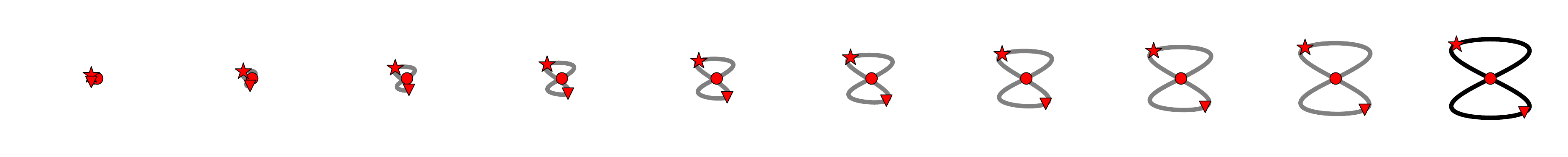} \\
	\includegraphics[scale=.4]{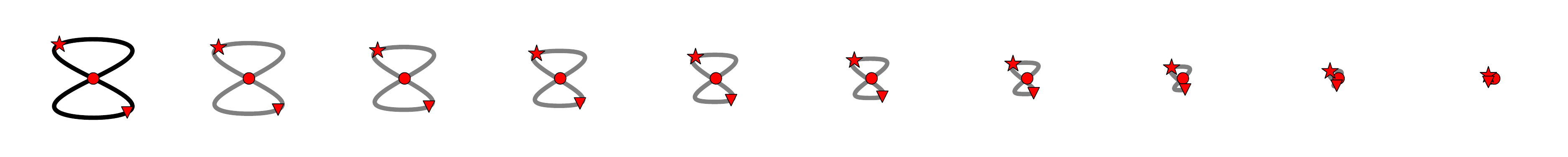}
	\caption{Interpolation between circle and figure-eight (top) and figure-eight and circle (bottom). \label{fig:asym-visualizations}}
\end{center}
\end{figure}

As these deformations appear quite similar, despite the large difference in the discrepancies, we investigate some further characteristics of the minimizer. In Figure~\ref{fig:theta-coordinate} we plot the $\theta$-coordinate of the minimizer for both matching problems. These curves appear quite different.

\begin{figure}
\begin{center}
	\includegraphics{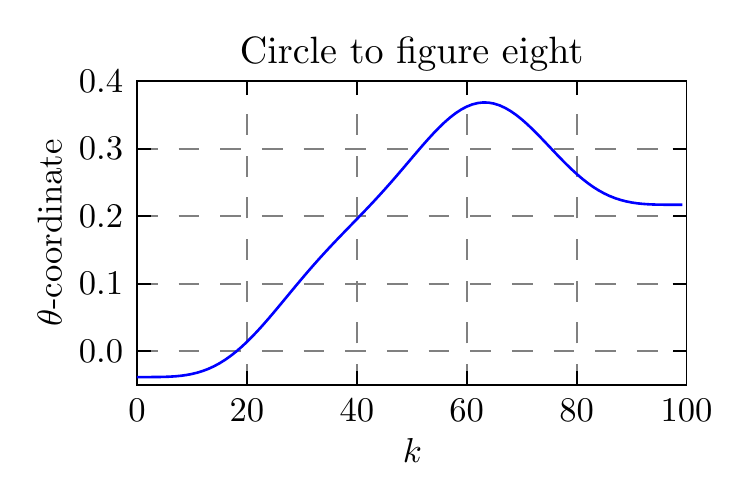}
	\includegraphics{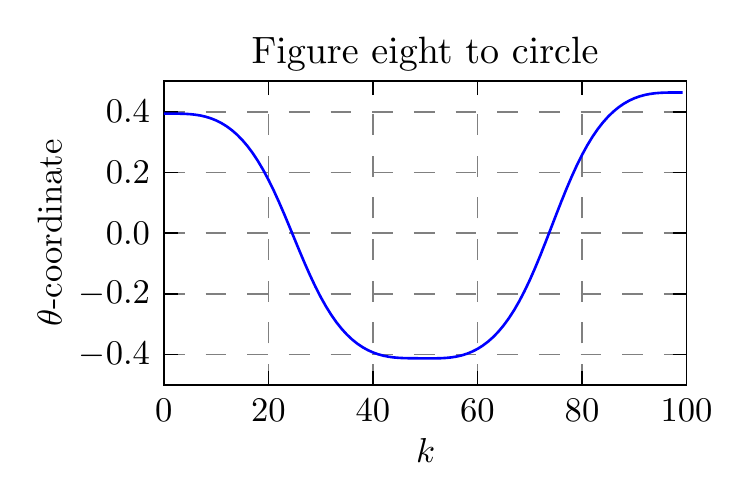}
	\caption{Theta-coordinate for the discrete curve matching the dot to the figure-eight (left), and for the figure-eight matching to the dot (right). These curves are quite different, indicating that the points on the dot, resp. the figure eight, undergo different rotations. \label{fig:theta-coordinate}}
\end{center}
\end{figure}

\subsection{Discrepancy of Polynomial Curves}

In this last example, we take a closer look at the relation between discrepancy and other geometrical invariants, in particular the total absolute curvature $\kappa$, defined as the integral of the absolute value of the curvature:
\[
	\kappa = \int_{s_0}^{s_1} \left| \rho(s) \right| \, ds.
\]

\begin{figure}
\begin{center}
	\includegraphics{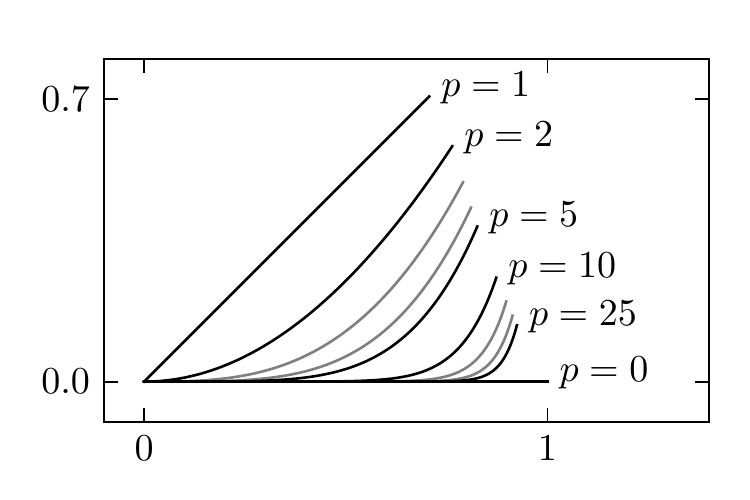}
	\includegraphics{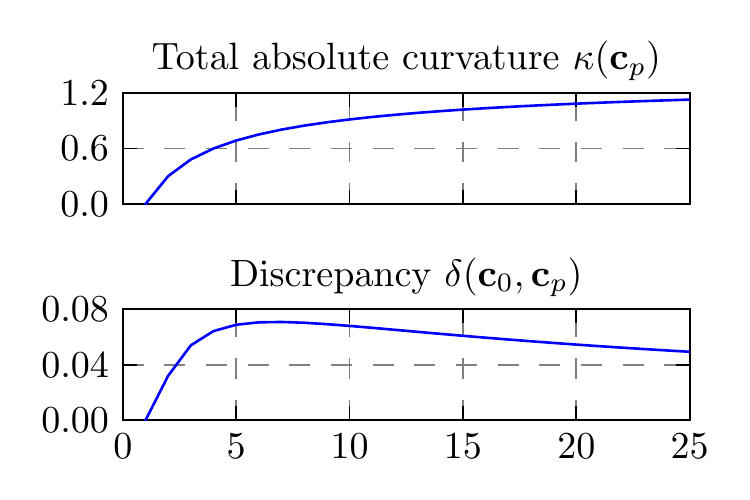}
	\caption{Left: Polynomial curves $\mathbf{c}_p : y = x^p$ for various exponents $p$.  Right: Total absolute curvature of $\mathbf{c}_p$ (top) and discrepancy between $\mathbf{c}_0$ and $\mathbf{c}_p$ (bottom) as a function of the exponent $p$. Whereas the total absolute curvature is an increasing function of $p$, the discrepancy reaches a maximum around $p = 6$ and then decreases.
	\label{fig:polycurves}}
\end{center}
\end{figure}

We focus on polynomial curves $y = x^p$, for $p = 1, 2, \ldots$ and set $\mathbf{c}_p(s) = (x_p(s), x_p(s)^p)$, where the parameter $s$ denotes arclength, $s = 0$ corresponds to the origin, and the curve is traversed in the direction of the positive $x$-axis (i.e. $x_p'(s) > 0$). Note that the $x_p(s)$-component is implicitly determined from the equation 
\[
	x'(s) = \frac{1}{\sqrt{1 + p^2 x(s)^{2p-2}}},
\]
which can easily be solved for various $p$ by numerical quadrature. We will focus on the unit length segment corresponding to $s \in [0, 1]$. A few of these curves are plotted in Figure~\ref{fig:polycurves}.

For each of these curves $\mathbf{c}_p(s)$, we choose $N$ equidistant points $s_k$ in the parameter interval $[0, 1]$, so that $s_k = (k-1)/(N-1)$ for $k = 1, \ldots, N$, and we set $(\mathbf{c}_p)_k = \mathbf{c}_p(s_k)$. In this way, we obtain for each exponent $p$ a discrete curve consisting of $N$ points $(\mathbf{c}_p)_k$, for $k = 1, \ldots, N$, at equal distance (along the curve) from each other. 

For each exponent $p > 0$, we compute the discrepancy between the discrete curve $(\mathbf{c}_p)_k$ and the fixed curve $(\mathbf{c}_0)_k$, which is parallel to the $x$-axis. We have tabulated the results for a selection of exponents $p$ in Table~\ref{table:discrepancy_vs_curvature}, along with the total geodesic curvatures for each of the curves $\mathbf{c}_p$. In Figure~\ref{fig:polycurves}, both invariants have been plotted for $p = 1, 2, \ldots, 25$. We see that the geodesic curvature increases as a function of $p$, corresponding to the more pronounced bend in the curve for higher $p$. On the other hand, the discrepancy first increases until $p = 6$ and then decreases: while the curves for high $p$ are more curved, the curvature is more localized and the curves as a whole are close to the $x$-axis, resulting in lower discrepancy.

\begin{table}
\begin{center}
\begin{tabular}{c|c|c|c|c|c|c|c|c|c}
$p$ & 1 & 2 & 3 & 4 & 5 & 10 & 15 & 20 & 25 \\
\hline \hline
$\delta(\mathbf{c}_0, \mathbf{c}_p)$ & 
	0 & 0.032 & 0.054 & 0.064 & 0.069 &
 	0.068 & 0.061 & 0.055 & 0.049 \\
\hline
$\kappa(\mathbf{c}_p)$ & 
	0 & 0.301 & 0.481 & 0.600 & 0.685 &
	0.913 & 1.019 & 1.083 & 1.127
\end{tabular}
\caption{Table of the discrepancy $\delta$ and the total absolute curvature $\kappa$ for various exponents $p$. As $p$ increases, the geodesic curvature increases, whereas the discrepancy reaches a maximum value around $p = 5$ and then decreases.  \label{table:discrepancy_vs_curvature}}
\end{center}
\end{table}

\section{Conclusions and Outlook}

In this paper, we have outlined a new measure for the discrepancy between planar curves, based on deforming one curve into the other by means of parameter-dependent transformations with values in the Lie group $SE(2)$. We defined a relative geodesic in $SE(2)$ to be a curve of transformations which extremizes a certain energy functional while mapping the first curve into the second, and we defined the discrepancy to be the value of the energy associated to the minimizing relative geodesic.

One of the advantages of our approach is that it can be generalized in a straightforward manner to deal with, for instance, discrepancies and relative geodesics between other types of geometric objects, such as curves in 3D or two-dimensional images. Another direction for future research addresses the choice of Lie group of transformations. In this paper we considered the group $SE(2)$ of rotations and translations, but other groups acting on the plane can be treated similarly. For instance, one could imagine acting on the curves by means of shearing transformations and translations, and in this case the relevant group is the semi-direct product $SL(2) \circledS \mathbb{R}^2$, where $SL(2)$ is the group of $2 \times 2$ matrices with unit determinant. 

\paragraph{Acknowledgements.}

We would like to thank Jaap Eldering, Henry Jacobs, and David Meier for stimulating discussions and helpful remarks.

DH and JV gratefully acknowledge partial support by the European Research Council Advanced Grant 267382 FCCA. LN is grateful for support from this grant during a visit to Imperial College. JV is also grateful for partial support by the {\sc irses} project {\sc
geomech} (nr.\ 246981) within the 7th European Community Framework Programme,
and is on leave from a Postdoctoral Fellowship of the Research Foundation--Flanders (FWO-Vlaanderen).

\bibliography{rel-geodesics}
\bibliographystyle{abbrv}

\end{document}